\newcommand{\ev}[1]{(#1)}
\newtheorem{theorem}{Theorem}[section]
\newtheorem{proposition}[theorem]{Proposition}
\newtheorem{lemma}[theorem]{Lemma}
\newtheorem*{lemma*}{Lemma}
\theoremstyle{definition}
\newtheorem{definition}[theorem]{Definition}
\newtheorem{remark}[theorem]{Remark}
\numberwithin{equation}{section}
\def\XXint#1#2#3{{\setbox0=\hbox{$#1{#2#3}{\int}$ }
		\vcenter{\hbox{$#2#3$ }}\kern-.56\wd0}}
\newcommand{\dom}{D}
\renewcommand{\geq}{\geqslant}
\renewcommand{\leq}{\leqslant}
\renewcommand{\epsilon}{\varepsilon}
\newcommand{\R}{\mathbb{R}}
\newcommand{\N}{\mathbb{N}}
\newcommand{\Grad}{\nabla}
\newcommand{\Div}{\operatorname{div}}
\newcommand{\Curl}{\operatorname{curl}}
\newcommand{\curl}{\operatorname{curl}}
\newcommand{\weak}{\rightharpoonup}
\renewcommand{\geq}{\geqslant}
\renewcommand{\leq}{\leqslant}
\renewcommand{\div}{{\textrm{div}}}
\newcommand{\norm}[1]{\left\| #1 \right\|}
\newcommand{\HH}{L^2_{\Div}(\dom)}
\newcommand{\VV}{H^1_{\Div}(\dom)}
\newcommand{\E}{\mathbb{E}}
\newcommand{\T}{\mathbb{T}}
\renewcommand{\P}{\mathbb{P}}
\newcommand{\w}{\eta}
\DeclareMathOperator{\diam}{diam}
\title[Energy balance]{Sufficient Conditions for the Energy Balance for the Stochastic Incompressible Euler Equations with Additive Noise in two Space Dimensions}
\author[T. Rohner]{Tobias Rohner}
\address[Tobias Rohner]{\newline Seminar for Applied Mathematics (SAM) \newline ETH Z\"urich \newline R\"amistrasse 101 \newline Z\"urich, Switzerland.}
\email[]{tobias.rohner@sam.math.ethz.ch}
\author[F. Weber]{Franziska Weber}
\address[Franziska Weber]{\newline Department of Mathematics \newline University of California, Berkeley \newline Berkeley, CA 94720, USA.}
\email[]{fweber@math.berkeley.edu}
\date{\today}
\thanks{F.W. was partially supported by NSF DMS 2438083.}
\thanks{This work was supported by a computing grant from the Swiss National Supercomputing Centre (CSCS) under project ID 1217.}
\begin{document}
\maketitle
\begin{abstract}
    We consider vanishing viscosity approximations to solutions of the stochastic incompressible Euler equations in two space dimensions with additive noise. We identify sufficient and necessary conditions under which martingale solutions of the stochastic Euler equations satisfy an exact energy balance in mean. We find that the tightness of the laws of the approximating sequence of solutions of the stochastic Navier-Stokes equations in $L^2([0,T]\times \dom)$ is equivalent to the limiting martingale solution satisfying an energy balance in mean. Numerical simulations illustrate the theoretical findings.
	
\end{abstract}

\section{Introduction}
In this article, we consider the stochastic incompressible Euler equations on $\dom=\T^2$, and a time interval $t\in [0,T]$ for some $T>0$,
\begin{equation} \label{SEuler}
	\begin{split}
		du + u \cdot \nabla u \, dt& = -\nabla p \, dt + f \,dt + \sigma \cdot dW,\\
		\Div u & = 0,\\
		u(0,x) & = \bar{u}(x), 
	\end{split}
\end{equation} 
describing the motion of an inviscid, incompressible fluid on a periodic domain. Here $u:[0,T]\times\dom\to \R^2$ is the velocity field and $p:[0,T]\times\dom\to \R$ is the pressure. The
stochastic forcing is assumed to be a mean zero, white-in-time and colored-in-space Gaussian
process, that can be represented by
\begin{equation*}
\sigma \cdot W =\sigma(x) \cdot W(t, \omega) = \sum_{j=1}^\infty b_j e_j(x) \beta_j(t, \omega)	
\end{equation*}
 where $\beta_j$ are independent standard  1D Brownian motions defined on a complete probability space $(\Omega, \mathcal{F}, \P)$ with a filtration $(\mathcal{G}_t)_{t\geq 0}$. We assume that the $\sigma$-algebras $\mathcal{G}_t$ are completed with respect to $(\mathcal{F},\P)$, that is, $(\mathcal{G}_t)_{t\geq 0}$ contains all the $\P$-null sets $A\in \mathcal{F}$.  
 $\{e_j\}_{j=1}^\infty$ is a basis of Stokes eigenfunctions on $\HH$, where $\HH$ is the space of $L^2(\dom)$-valued functions that are divergence free almost everywhere.
 We assume furthermore the bounded trace conditions
\begin{equation}
	\label{eq:bounded trace}
	\sum_j b_j^2:=\bar{\sigma}<\infty,\quad \sum_j b_j^2\norm{\Curl e_j}_{L^2}^2:=\bar{\rho}<\infty.
\end{equation}
$f$ represents a deterministic forcing term satisfying $\Div f =0$ a.e. in $[0,T]\times\dom$. The stochastic term is added to account for uncertainties and random fluctuations that are inherent in real-world fluid flows. These could be environmental factors such as temperature variations, wind patterns, random external forces such as vibrations, etc. Furthermore, taking a probabilistic view is common in turbulence modeling and goes back to the famous work of Kolmogorov in 1941~\cite{K41a,K41b,K41c}.
The Euler equations describe the motion of an idealized fluid in the absence of friction and other diffusive forcing and are the formal vanishing viscosity limit of the stochastic Navier-Stokes equations. Formally, with a simple application of It\^o's formula, one can derive the energy balance in mean
\begin{equation}\label{eq:energybalanceintro}
	\E \int_{\dom} |u(t)|^2 dx =  \E\int_{\dom} |\bar{u}|^2 dx  + 2\E\int_0^t\int_{\dom} f(s)\cdot u(s) dx ds + \bar{\sigma } t.
\end{equation}
If $u$ is sufficiently smooth in space, this identity can be rigorously justified. However, when martingale solutions, which are the stochastic equivalent of weak solutions in the deterministic setting, are considered,~\eqref{eq:energybalanceintro} may only hold as an inequality. Moreover, energy dissipation is an integral part of Kolmogorov's K41 theory for 3D incompressible turbulence. Lars Onsager conjectured in 1949~\cite{Onsager1949} that deterministic three-dimensional turbulent flows conserve the energy law if the H\"older exponent $\theta$ of the solution is greater than 1/3, whereas anomalous dissipation of energy occurs when the H\"older exponent is less than 1/3. For the 3D deterministic Euler equations, the $\theta>1/3$ case has been settled a long time ago in the works by Eyink~\cite{Eyink1994} and Constantin, E and Titi~\cite{Constantin1994}. The $\theta<1/3$ part of the conjecture was finally proved in the recent work by Isett~\cite{Isett2018} building upon earlier work by Sheffer~\cite{Scheffer1993}, Shnirelman~\cite{Shnirelman1997}, and De Lellis and Sz\'ekelyhidi~\cite{DeLellis2009}, and others. In two space dimensions, the results of~\cite{Constantin1994} are readily applied to obtain energy conservation for $\theta>1/3$ in the force-free setting. One might hope that in two space dimensions, energy conservation might be possible under weaker conditions; however, this is not the case: Recenlty, Giri and Radu~\cite{Giri2024} constructed for any $\theta<1/3$ nontrivial weak solutions of the unforced Euler equations that satisfy $u\in C^\theta(\R_{\geq 0}\times \T^2)$ and do not conserve the energy.

In mathematical turbulence theory, solutions of the Euler equations are characterized as the zero viscosity limit of solutions of the Navier-Stokes equations. In fact, most numerical methods for approximating the Euler equations are built on the same paradigm: As the mesh size decreases, the numerical viscosity in the scheme vanishes. Therefore, it is interesting to determine conditions on the approximating sequence which guarantee that the limiting solution of the Euler equations satisfies an energy balance. In the 2D deterministic setting, such conditions were first investigated in the work~\cite{Cheskidov2016} in the unforced deterministic case. This work was extended to provide a complete characterization of energy-conservative solutions of the incompressible Euler equations that can be obtained as the zero viscosity of Navier-Stokes solutions in~\cite{Lanthaler2021}. In fact, it is shown that the zero viscosity limit that satisfies energy conservation is equivalent to the strong convergence of the sequence of solutions of the Navier-Stokes equations in $L^2([0,T]\times\dom)$ (where $[0,T]$ is an arbitrary time interval). The strong convergence in $L^2([0,T]\times\dom)$ again is equivalent to the Navier-Stokes solutions' time-averaged second-order structure functions having a uniform modulus of continuity.  Structure functions quantify how differences in a flow's velocity behave over a given distance or time separation, typically in terms of the difference in values at two points; for the precise definition used here, see Definition~\ref{def:structurefunction}. Later, these results were extended to the deterministically forced case in~\cite{Lopes2022,Jin2024}. In a recent work by De Rosa and Park~\cite{DeRosa2024}, it is furthermore shown that when the initial vorticity is a measure with positive singular part, any sequence of vanishing viscosity Leray-Hopf solutions to the periodic two-dimensional unforced incompressible Navier-Stokes equations does not display anomalous dissipation, thus further characterizing the dissipation mechanism in the case of weak convergence.
While these results hold in the two-dimensional periodic case, Ciampa~\cite{Ciampa2022} derived sufficient conditions for energy conservation in the case of the two dimensional plane.

As mentioned earlier, stochastic versions of the Navier-Stokes and Euler equations may be more natural to answer questions of turbulence than the corresponding deterministic models. However, the question of whether an energy balance in mean such as~\eqref{eq:energybalanceintro} holds for suitable notions of weak solutions of the stochastic Euler equations~\eqref{SEuler} that are obtained as vanishing viscosity limits from the Navier-Stokes equations does not appear to have been investigated yet. We therefore aim to start addressing this issue in this work and derive sufficient and necessary conditions under which an energy balance holds.
We find that if an averaged (in probability space) version of the structure function condition in~\cite{Lanthaler2021} holds, the laws of the Navier-Stokes solutions in the vanishing viscosity limit are tight in $L^2(0,T\times\dom)$. Thus, using Skorokhod's theorem, random fields can be identified such that the limit is a martingale solution of~\eqref{SEuler} and satisfies an energy balance in mean as in~\eqref{eq:energybalanceintro} with respect to the probability space coming from the application of Skorokhod's theorem. Conversely, we prove that if $u$ is a solution of the stochastic Euler equations that satisfies the energy balance~\eqref{eq:energybalanceintro} and is obtained as the vanishing viscosity limit of a sequence of solutions of the stochastic Navier-Stokes equations, then the laws of this approximating sequence are tight on $L^2([0,T]\times\dom)$. This is roughly a stochastic version of the equivalence of strong convergence and energy balance for the zero viscosity limit that was established in~\cite{Lanthaler2021}. We are able to adapt many of the technical tools of~\cite{Lanthaler2021,Jin2024} to the stochastic setting, however, the probabilistic setting added additional difficulty such as possibly unbounded trajectories. We resolved this by using a stopping time argument that allows us to deal with a.s. $L^2$-bounded realizations in which we can pass the viscosity to zero before finally sending the stopping time to infinity.

Although this work is restricted to additive smooth-in-space white noise, we hope that many of the techniques will also apply to other forms of noise, such as multiplicative or transport noise. We plan to study this in future work.

We conclude with numerical experiments with flat vortex sheet and fractional Brownian bridge initial data illustrating that the energy balance is close to being achieved in practice.

The rest of this article is organized as follows. In Section~\ref{sec:not}, we introduce the necessary notation and definitions of solutions. In Section~\ref{sec:balance}, we establish conditions for the energy balance in mean for solutions of~\eqref{SEuler} obtained as limits of solutions of the stochastic Navier-Stokes equations. Finally, in the last Section~\ref{sec:num}, we present numerical examples that confirm that the energy balance is maintained for rather rough initial conditions.

\section{Notation and preliminary results}\label{sec:not}
We start by introducing the notation and definitions that are needed later. We use the standard notation for Lebesgue and Sobolev spaces. We denote the subspaces of $L^2(\dom)$ and $H^1(\dom)$ consisting of almost everywhere divergence free functions as
$$\HH = \left\{ u \in L^2(\dom): \div u = 0 \right\},$$
$$\VV = \left\{ u \in H^1(\dom): \div u = 0 \right\}.$$
Sometimes, we will abbreviate $L^2 = L^2(\dom)$ and $H^1 = H^1(\dom)$. 
We will denote the $L^2(\dom)$ and $\HH$-inner product for functions or vector fields $f,g\in L^2(\dom)$
\begin{equation*}
	(f,g) = \int_{L^2(\dom)} f(x)\cdot g(x) dx.
\end{equation*}
For a Banach space $X$, we denote by $X_w$, the Banach space equipped with the topology of weak convergence. We say that a time-dependent function $f:[0,T]\to X$ is weakly continuous, denoted $f\in C(0,T;X_w)$, if the mapping $t\mapsto \langle f(t),\psi\rangle_{X,X^*}$ is continuous for any $\psi\in X^*$.
 Next, we define the space of smooth cylindrical functions $\text{Cyl}(X)$ for a Hilbert space $X$.
 \begin{definition}
 	We denote by $\Pi_d(X)$ the space of all maps $\pi:X\to \R^d$ of the form 
 	\begin{equation*}
 		\pi (x)  = (\langle x,e_1\rangle, \langle x,e_2\rangle, \dots, \langle x,e_d\rangle), \quad x\in X,
 	\end{equation*}
 	where $\{e_1,\dots,e_d\}$ is any orthonormal family of vectors in $X$. 
 	We denote by $\text{Cyl}(X)$ the functions $\varphi = \psi\circ \pi$ with $\pi\in \Pi_d(X)$ and $\psi\in C_c^\infty(\R^d)$.
 \end{definition}
 
 \begin{remark}\label{rem:weaktight}
 	Note that $\varphi=\psi\circ \pi\in \text{Cyl}(X)$ is a Lipschitz function and continuous with respect to the weak topology on $X$. By~\cite[Lemma 5.1.12]{Ambrosio2008}, we have that a sequence if 
 	$\mathcal{K}:=\{\mu_n\}\subset \mathcal{P}(X)$ is weakly tight in the sense  that
 	\begin{equation*}
 		\forall \epsilon >0\quad \exists R_\epsilon>0,\quad \text{such that }\quad \mu(X\backslash\overline{B_{R_\epsilon}})\leq \epsilon,\quad \forall \mu\in \mathcal{K}
 	\end{equation*}
 	it converges weakly to $\mu$ in $\mathcal{P}(X_{{w}})$ if and only if
 	\begin{equation}\label{eq:cylconv}
 		\lim_{n\to\infty}\int_X\varphi(x) d\mu_n(x) =\int_X \varphi(x) d\mu(x),\quad \forall\varphi\in \text{Cyl}(X).
 	\end{equation}
 \end{remark}
 Next, we define what we mean by solutions of the Euler equations~\eqref{SEuler} and of the Navier-Stokes equations which can be seen as the viscous approximation of the Euler equations. We start with the Navier-Stokes equations.
 \begin{definition}[Solutions of the stochastic Navier-Stokes equations~\cite{Kuksin2012}]\label{def:strongsolNS}
 	Let $T>0$. Let $f_\nu \in L^2([0,T); \HH)$ and $\bar{u}_\nu\in L^2(\Omega,\HH)$.  Let  $\{\beta_j\}_{j\in \N}$ be Brownian motions defined on a complete probability space $(\Omega,\mathcal{F},\P)$ with a filtration $\mathcal{G}_t$ $t\geq 0$ which is complete with respect to $(\Omega,\mathcal{F},\P)$,  and assume that $\bar{u}_\nu$ is measurable with respect to $\mathcal{G}_0$. Then an $\HH$-valued process $u_\nu(t)$, $t\geq 0$, is called a \emph{probabilistically strong solution} of the stochastic Navier-Stokes equations
 	 	\begin{equation} \label{SNS}
 	 		\begin{split}
 		du_\nu + u_\nu \cdot \nabla u_\nu \, dt& = -\nabla p_\nu \, dt + \nu \Delta u_\nu \,dt + f_\nu \,dt + \sigma \cdot dW \\
 		\Div u_\nu &= 0,\\
 		u_\nu(0,x) & = \bar{u}_\nu,
 	\end{split}
 	\end{equation}
 	for $t\in [0,T]$ if
 	\begin{enumerate}[(a)]
 		\item  The process $u_\nu(t)$ is adapted to the filtration $\mathcal{G}_t$, and almost every of its trajectories belongs to the space 
 		\begin{equation*}
 			C([0,T],\HH)\cap L^2([0,T],\VV).
 		\end{equation*}
 		\item For all $t\in [0,T]$, and all divergence free $\varphi\in C^\infty(\dom)$, the following identity holds $\P$-almost surely,
 			\begin{align}\label{def NS mart sol}
 			\begin{split}
 			&\int_{\dom} u_\nu(t)\cdot \varphi dx + \nu\int_0^t\int_{\dom} \Grad u_\nu(s): \Grad \varphi dx ds + \int_0^t\int_{\dom}\left[ (u_\nu(s)\cdot \Grad)u_\nu(s)\right]\cdot \varphi dx ds\\
 			&\qquad = \int_{\dom} \bar{u}_\nu \cdot \varphi dx +\int_0^t\int_{\dom} f_\nu \cdot \varphi dx ds+ \sum_k \int_0^t b_k e_k \cdot \varphi dx d\beta_k(s).
 			\end{split}
 		\end{align}
 		\item The initial condition $u_\nu(0)=\bar{u}_\nu$ is satisfied almost surely.
 	\end{enumerate}
 \end{definition}
 Existence and uniqueness of such solutions of the 2D-Navier-Stokes equations was shown in~\cite[Proposition 2.4.6]{Kuksin2012}. In~\cite[Propositon 2.4.8]{Kuksin2012}, it was also shown that such solutions satisfy the energy balance
 	\begin{equation}\label{eq:NSenergybalance}
 	\frac12\E\norm{u_\nu(t_2)}_{L^2}^2 -\frac12\E \norm{u_\nu(t_1)}_{L^2}+\nu\E\int_{t_1}^{t_2}\norm{\Grad u_\nu(s)}_{L^2}^2ds = \frac12\bar{\sigma} (t_2-t_1)+\E\int_{t_1}^{t_2} (f_\nu,u_\nu)ds.
 \end{equation}
 for every $t_2>t_1$, $t_1=0$ and almost every $t_1\in (0,\infty)$,
 as long as the initial data $\bar{u}_\nu$ has finite second moment.
 \begin{remark}\label{rem:gronwallbound}
 	The last term in the energy balance~\eqref{eq:NSenergybalance} can be bounded using Young's inequality:
 	\begin{equation*}
 		\left|\E\int_{t_1}^{t_2}(u_\nu,f_\nu)ds \right| \leq \frac12\E\int_{t_1}^{t_2} \norm{u_\nu}_{L^2}^2ds +  \frac12\E\int_{t_1}^{t_2} \norm{f_\nu}_{L^2}^2 ds.
 	\end{equation*}
 	Then using Gr\"onwall inequality, it follows that $\E\norm{u_\nu(t_2)}_{L^2}^2$ is bounded by $\E\norm{u_\nu(t_1)}_{L^2}^2$ times a constant depending on time, $f_\nu$ and $\bar{\sigma}$.
 \end{remark}
Next, we define solutions in the probabilistically strong sense and  martingale solutions for the Euler equations~\eqref{SEuler}. Solutions that are probabilistically strong are defined in the same way as for the Navier-Stokes equations:
\begin{definition}[Solutions of the stochastic Euler equations in the probabilistically strong sense~\cite{Kuksin2012}]\label{def:strongsolEuler}
	
	Assume that $f \in L^2([0,T); \HH)$ and $\bar{u}\in L^2(\Omega,\HH)$.  Let  $\{\beta_j\}_{j\in \N}$ be 1D Brownian motions defined on a complete probability space $(\Omega,\mathcal{F},\P)$ with a filtration $\mathcal{G}_t$ $t\geq 0$ which is complete with respect to $(\mathcal{F},\P)$,  and assume that $\bar{u}$ is measurable with respect to $\mathcal{G}_0$. Then an $\HH$-valued process $u(t)$, $t\geq 0$, is called a \emph{solution in the probabilistically strong sense} of the stochastic Euler equations~\eqref{SEuler},
	for $t\in [0,T]$ if
	\begin{enumerate}[(a)]
		\item  The process $u(t)$ is adapted to the filtration $\mathcal{G}_t$, almost every of its trajectories belongs to the space 
		\begin{equation*}
			C([0,T];H^\alpha(\dom))\cap L^\infty(0,T;\HH),\quad \text{for some }\, \alpha<0.
		\end{equation*}
		\item For all $t\in [0,T]$, and all divergence free $\varphi\in C^\infty(\dom)$, the following identity holds $\P$-almost surely,
		\begin{align*}
			\int_{\dom} u(t)\cdot \varphi dx - \int_0^t\int_{\dom}\left[ (u(s)\cdot \Grad)\varphi)\right]\cdot u(s) dx ds
		= \int_{\dom} \bar{u} \cdot \varphi dx +\int_0^t \int_{\dom} f\cdot \varphi dx ds + \sum_k \int_0^t b_k e_k \cdot \varphi dx d\beta_k(s).
		\end{align*}
		\item The initial condition $u(0)=\bar{u}$ is satisfied almost surely.
	\end{enumerate}
\end{definition}
Additionally, the solutions satisfy an energy inequality if $\bar{u}$ has a finite second moment:
	For every $t_2>t_1$, for $t_1=0$ and almost every $t_1\in (0,\infty)$,
	\begin{equation*}
		\frac12\E\norm{u(t_2)}_{L^2}^2 -\frac12\E \norm{u(t_1)}_{L^2} \leq \frac12\bar{\sigma} (t_2-t_1)+\E\int_{t_1}^{t_2}(f,u)  ds.
\end{equation*}
As in Remark~\ref{rem:gronwallbound}, this implies boundedness of $\E\norm{u(t)}_{L^2}^2$ uniformly in time if the initial data is in $L^2(\Omega;\HH)$.
In addition, there is a weaker solution concept when we are not able to deduce measurability of $u(t)$ with respect to the filtration $\{\mathcal{G}_t\}$ from the Brownian motion.  This is the type of solutions we will need later on. For these \emph{martingale solutions}, the probability space and the filtration are part of the solution concept.

\begin{definition}[Martingale solutions of Euler equations]\label{def:eulersol}
	  Let $f \in L^2([0,T); \HH)$.  A \emph{martingale solution} to the stochastic Euler equations~\eqref{SEuler}
	on $[0,T]$ consists of a stochastic basis $(\widetilde{\Omega},\widetilde{\mathcal{F}},(\widetilde{\mathcal{G}}_t)_{t\in[0,T]}, \widetilde{\P},\{\widetilde{\beta}_k\})$, where the $\{\widetilde{\beta}_k\}$ are Brownian motions with the same law as $\{\beta_k\}$, with a complete right-continuous filtration and an $\widetilde{\mathcal{G}}_t$-progressively measurable stochastic process
	\begin{equation*}
		u:[0,T]\times \Omega\to \HH,
	\end{equation*}
	such that
	\begin{itemize}
		\item $u$ has sample paths in 
		\begin{equation*}
			C([0,T];H^\alpha(\dom))\cap L^\infty(0,T;\HH),\quad \text{for some }\, \alpha<0.
		\end{equation*}
		\item For all $t\in [0,T]$, and all divergence free $\varphi\in C^\infty(\dom)$, the following identity holds $\widetilde{\P}$-almost surely,
		\begin{equation}	\label{eq:martingalesolutionseuler}
			\int_{\dom} u(t)\cdot \varphi dx - \int_0^t\int_{\dom}\left[ (u(s)\cdot \Grad)\varphi)\right]\cdot u(s) dx ds
		= \int_{\dom} \bar{u} \cdot \varphi dx +\int_0^t \int_{\dom} f\cdot \varphi dx ds + \sum_k \int_0^t b_k e_k \cdot \varphi dx d\widetilde{\beta}_k(s).
		\end{equation}
		\item $u$ satisfies the energy inequality in mean for every $t_2>t_1$, for $t_1=0$ and almost every $t_1\in (0,\infty)$,
		\begin{equation*}
			\frac12\widetilde{\E}\norm{u(t_2)}_{L^2}^2 -\frac12\widetilde{\E} \norm{u(t_1)}_{L^2} \leq \frac12\bar{\sigma} (t_2-t_1)+\widetilde{\E}\int_{t_1}^{t_2}(f,u)  ds,
		\end{equation*}
		here $\widetilde{\E}$ denotes the expectation with respect to $\widetilde{\P}$.
	\end{itemize}

\end{definition}

Note that for a martingale solution, the probability space is part of the solution definition. 	However, if there is no risk of confusion, we simply say that $u$ is a martingale solution without making reference to the stochastic basis.

Existence of martingale solutions to the 2D stochastic Euler equations was proved in~\cite{Bessaih2013,Bessaih1999b}.
Under the assumption that $f\in L^2(0,T;\VV)$ and $\bar {u}\in \VV$ a.e. $\omega\in \Omega$ and the trajectories of the Brownian motion $W$ satisfying $W(\cdot,\omega)\in C([0,T];H^4(\dom)\cap \VV)$, existence of a probabilistically strong solution with $u(\cdot,\omega)\in C(0,T;\HH)\cap L^2(0,T;\VV)$ for $\P$-almost every $\omega\in \Omega$ can be shown~\cite[Theorem 1.1]{Bessaih1999}. Note that while for  solutions in the probabilistically strong sense the probability space is given, they are not necessarily unique. However, if in addition $\Curl\bar{u}\in L^\infty(\dom)$ for almost every $\omega\in \Omega$, $\Curl f\in L^\infty([0,T]\times\dom)$ and $\Delta\Curl W\in L^\infty([0,T]\times\dom)$ for $\P$-almost every $\omega\in \Omega$, then the solution is unique~\cite[Theorem 1.2]{Bessaih1999}. This was improved slightly in~\cite{Kim2002} using a different approximation technique to obtain a unique pathwise solution under the condition that $W(\cdot,\cdot,\omega)\in C([0,T];H^{3+\alpha}(\dom))$ a.s. for some $\alpha>0$ and the existence of a pathwise solution under the condition that $u_0\in \VV$ is deterministic and $W$ is given as $W(t,x,\omega)=g(x) B(t,\omega)$ where $g\in \VV$ and $B$ is a standard one-dimensional  Brownian motion.

\begin{remark}
	Martingale solutions can be defined for the stochastic Navier-Stokes equations as well, but are not necessary for additive noise in 2D that we are considering here, see~\cite{Bensoussan1973} for the existence of solutions in the probabilistically strong sense in 2 and 3D and~\cite{Flandoli1995} for martingale solutions in 3D where the noise is multiplicative.
\end{remark}
We will also denote by $\w= \Curl u$ the vorticity of the solution $u$ of the Euler equations. Here the 2D curl is defined as $\Curl u = \partial_1 u^2-\partial_2 u^1$. Formally, in 2D, it solves the equation
\begin{equation}
	\label{eq:vorticityEuler}
	\begin{split}
		d\w + (u\cdot\Grad) \w dt &= \Curl f dt + \Curl\sigma \cdot dW,\\
		\Curl u & = \w,\\ 
		\w(0,x) & = \bar{\w}(x) = \Curl \bar{u}(x). 
	\end{split}
\end{equation}

Finally, we require the definition of the second-order structure function and the modulus of continuity.
\begin{definition}[Structure functions]\label{def:structurefunction}
	For $v \in L^2_x$ define the second-order \emph{structure function}
	$$S_2(v;r) = \left( \int_D \int_{B_r(0)} |v(x+h) - v(x)|^2 \, dh \, dx \right)^{1/2}.$$
	and the time-integrated structure function
	\begin{equation*}
		S_2^T(v;r) = \left(\int_0^T \int_D \int_{B_r(0)} |v(t,x+h) - v(t,x)|^2 \, dh \, dx \, dt \right)^{1/2}= \left(\int_0^T S_2(v(t),r)^2 dt\right)^{1/2}.
	\end{equation*}
\end{definition}
\begin{definition}[Modulus of continuity]
	\label{def:moc}
	
	A modulus of continuity is a function $\phi:[0,\infty)\to [0,\infty)$ to measure the continuity of functions quantitatively. $\phi$ is continuous and satisfies $\phi(0)=0$. We say a function $f:(X,d_X)\to (Y,d_Y)$, where $(X,d_X)$ and $(Y,d_Y)$ are two metric spaces, admits a modulus of continuity $\phi$ if
	\begin{equation*}
		d_Y(f(x),f(y))\leq \phi(d_X(x,y)),
	\end{equation*}
	for any $x,y\in X$.
\end{definition}
We will assume in the following that any modulus of continuity $\phi$ is monotone increasing, which will not be a restriction, since if $f$ admits any given modulus of continuity $\phi$, we can find a continuous  $\widetilde{\phi}$ that satisfies $\widetilde{\phi}(0)=0$ and $\widetilde{\phi}\geq \phi$ and is monotone increasing and hence a modulus of continuity for the same given function.
It turns out that if the structure functions $S_2(v_n,r)$ of a bounded sequence of functions $\{v_n\}_{n\in \N}\subset L^2(\dom)$ have a common modulus of continuity, the sequence is precompact in $L^2(\dom)$:
\begin{theorem}[Fr\'echet-Kolmogorov theorem]
	\label{prop:mocL2}
	Let $\{v_n\}_{n\in \N}$ be a bounded sequence of functions in $L^2(\dom)$ where $\dom\subset \R^d$ is bounded, $d\in \N$. Assume that for all $n\in \N$
	\begin{equation*}
		S_2(v_n,r)\leq \phi(r),\quad \forall \, r>0,
	\end{equation*}
	for some modulus of continuity $\phi$.  Then $\{v_n\}_{n\in \N}$ has a subsequence that is convergent in $L^2(\dom)$.
\end{theorem}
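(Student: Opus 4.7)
The plan is to deduce precompactness in $L^2(\dom)$ by combining a mollification argument with the Arzel\`a--Ascoli theorem, which is the strategy underlying the Fr\'echet--Kolmogorov criterion. Fix a nonnegative even mollifier $\rho\in C^\infty_c(\R^d)$ supported in $B_1(0)$ with $\int\rho=1$, set $\rho_r(h):=r^{-d}\rho(h/r)$, and define $v_n^{r}:=v_n*\rho_r$ after extending $v_n$ by zero outside $\dom$ (or using periodicity when $\dom=\T^d$). Jensen's inequality applied to the probability density $\rho_r$ converts the structure-function hypothesis into a uniform-in-$n$ approximation estimate:
\begin{equation*}
\|v_n^{r}-v_n\|_{L^2(\dom)}^2 \leq \int_{B_r(0)}\rho_r(h)\int_{\dom}|v_n(x+h)-v_n(x)|^2\,dx\,dh \leq \|\rho_r\|_{L^\infty}\, S_2(v_n,r)^2 \leq C\, r^{-d}\,\phi(r)^2.
\end{equation*}

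For each fixed $r>0$, the mollified family $\{v_n^r\}_n$ is equibounded in $L^\infty(\dom)$ by $r^{-d/2}\|\rho\|_{L^2}\sup_n\|v_n\|_{L^2}$ and is equi-Lipschitz with constant proportional to $r^{-d/2-1}\sup_n\|v_n\|_{L^2}$, since $\Grad v_n^r = v_n*\Grad\rho_r$ and $\|\Grad\rho_r\|_{L^2} = r^{-d/2-1}\|\Grad\rho\|_{L^2}$. Since $\dom$ is bounded, the Arzel\`a--Ascoli theorem supplies a subsequence of $\{v_n^r\}$ converging uniformly on $\overline{\dom}$, and hence in $L^2(\dom)$.

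I then perform a standard diagonal extraction along a sequence $r_k\downarrow 0$ to obtain a single subsequence $\{v_{n_k}\}$ for which $\{v_{n_k}^{r_j}\}_k$ is $L^2$-convergent for every fixed $j$. Inserting the triangle inequality
\begin{equation*}
\|v_{n_k}-v_{n_\ell}\|_{L^2} \leq \|v_{n_k}-v_{n_k}^{r_j}\|_{L^2} + \|v_{n_k}^{r_j}-v_{n_\ell}^{r_j}\|_{L^2} + \|v_{n_\ell}^{r_j}-v_{n_\ell}\|_{L^2},
\end{equation*}
and first choosing $j$ large (so the outer terms are uniformly small by the approximation estimate) and then $k,\ell$ large (so the middle term becomes small by the diagonal construction), I conclude that $\{v_{n_k}\}$ is Cauchy in $L^2(\dom)$, which furnishes the desired convergent subsequence. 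The step that requires the most care is verifying that $r^{-d}\phi(r)^2\to 0$ as $r\downarrow 0$: this is exactly the content of demanding that $\phi$ serve as a common modulus of continuity for the (non-normalized) structure functions $S_2(v_n,\cdot)$, whose natural scale already carries an $r^{d/2}$ factor from the inner ball integration in the definition.
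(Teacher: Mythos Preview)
The paper does not actually prove this statement; it simply cites \cite[Theorem~A.8]{Holden2015}. Your mollification/Arzel\`a--Ascoli argument is the standard proof of the Fr\'echet--Kolmogorov criterion and is essentially what one finds in such references.

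There is, however, a genuine issue in your final paragraph. With the paper's literal (unnormalized) Definition~\ref{def:structurefunction}, the hypothesis $S_2(v_n,r)\leq\phi(r)$ does \emph{not} force $r^{-d/2}\phi(r)\to 0$: a modulus of continuity is only required to be continuous and vanish at the origin, so for instance $\phi(r)=r^{d/4}$ is admissible while $r^{-d/2}\phi(r)\to\infty$. Your heuristic that the ``natural scale already carries an $r^{d/2}$ factor'' does not convert the hypothesis into the needed decay. Worse, with the unnormalized $S_2$ any bounded sequence in $L^2(\dom)$ satisfies $S_2(v_n,r)\leq Cr^{d/2}$ trivially, so the theorem as literally stated would be false.

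The resolution is that Definition~\ref{def:structurefunction} evidently contains a typo: the inner integral should be $\fint_{B_r(0)}$ rather than $\int_{B_r(0)}$, as the paper itself uses in the proof of Lemma~\ref{lem:poincaremean}, where $\int_{\dom}\fint_{B_r(0)}|v(x+h)-v(x)|^2\,dh\,dx$ is identified with $S_2(v;r)^2$. With that normalization your approximation bound becomes
\[
\|v_n^{r}-v_n\|_{L^2(\dom)}^2 \leq \|\rho_r\|_{L^\infty}\,|B_r(0)|\, S_2(v_n,r)^2 \leq C\,\phi(r)^2,
\]
since $\|\rho_r\|_{L^\infty}\,|B_r(0)|\leq \|\rho\|_{L^\infty}\,|B_1(0)|$, and the rest of your argument goes through cleanly. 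You should rewrite the estimate accordingly rather than try to argue that $r^{-d}\phi(r)^2\to 0$.
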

For a proof, see e.g.~\cite[Theorem A.8]{Holden2015}. If the structure functions $S_2^T(v_n,r)$ of a bounded sequence of time-dependent functions $\{v_n\}_{n\in \N}\subset L^2([0,T];L^2(\dom))$, where $\dom\subset \R^d$ is a bounded domain, have a uniform modulus of continuity and in addition some time regularity, then the sequence is precompact in $L^2([0,T];L^2(\dom))$:
\begin{proposition}\label{prop:moctimedep}
	Let $\{v_n\}_{n\in \N}$ be a bounded sequence of functions in $L^2(0,T;L^2(\dom))$, where $\dom\subset \R^d$ is bounded, $d\in\N$. Assume that for all $n\in \N$,
	\begin{equation*}
		S_2^T(v_n,r)\leq \phi(r),\quad \forall \, r>0,
	\end{equation*} 
	for some modulus of continuity $\phi$. Assume in addition that 
	\begin{equation*}
		\{v_n\}_{n\in \N}\subset W^{\beta_1,p_1}(0,T;H^{-L}(\dom))+ \dots + W^{\beta_m,p_m}(0,T;H^{-L}(\dom)),
	\end{equation*}
	uniformly for all $n\in \N$, and
	where 
	\begin{equation*}
		\beta_1,\dots, \beta_m\in (0,1],\quad p_1,\dots, p_m>1
	\end{equation*}
	satisfy $\beta_i p_i>1$ for all $i=1,\dots, m$. Then $\{v_n\}_{n\in \N}\subset C^\alpha(0,T;H^{-L}(\dom))$ uniformly in $n\in \N$, where $\alpha = \min_{i=1,\dots,m}(\beta_i-1/p_i)$ and  $\{v_n\}_{n\in \N}$ has a subsequence that is strongly convergent in $L^2(0,T;L^2(\dom))$.
\end{proposition}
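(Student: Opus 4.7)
The plan is to combine the spatial Fr\'echet--Kolmogorov type compactness coming from the uniform structure function bound with a temporal equicontinuity statement derived from Sobolev embedding, glued together by a spatial mollification and a diagonal argument.

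For the uniform H\"older-in-time bound, I would decompose $v_n = v_n^{(1)} + \cdots + v_n^{(m)}$ with each $v_n^{(i)} \in W^{\beta_i,p_i}(0,T;H^{-L}(\dom))$ uniformly bounded in $n$, by taking a near-minimizer of the sum-space norm. Since $\beta_i p_i > 1$, the vector-valued Sobolev embedding $W^{\beta_i,p_i}(0,T;H^{-L}) \hookrightarrow C^{\beta_i - 1/p_i}(0,T;H^{-L})$ applies componentwise; summing yields $\{v_n\} \subset C^\alpha(0,T;H^{-L}(\dom))$ uniformly with $\alpha = \min_i(\beta_i - 1/p_i)$, which gives the first conclusion. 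In particular,
\begin{equation*}
\sup_{t\in[0,T]} \norm{v_n(t)}_{H^{-L}} \leq C, \qquad \norm{v_n(t) - v_n(s)}_{H^{-L}} \leq C|t-s|^\alpha,
\end{equation*}
uniformly in $n$.

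For the $L^2(0,T;L^2(\dom))$-precompactness, I introduce a standard periodic spatial mollifier $\rho_\delta$ supported in a ball of radius $\delta$ and set $v_n^\delta := \rho_\delta *_x v_n$. A Cauchy--Schwarz estimate combined with the uniform structure function bound gives
\begin{equation*}
\norm{v_n - v_n^\delta}_{L^2([0,T]\times \dom)}^2 \leq \int \rho_\delta(h)\, \norm{v_n(\cdot,\cdot-h) - v_n}_{L^2([0,T]\times \dom)}^2\, dh =: \Psi(\delta)^2,
\end{equation*}
with $\Psi(\delta) \to 0$ uniformly in $n$ by the same reasoning underlying Theorem~\ref{prop:mocL2}. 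Simultaneously, convolution with $\rho_\delta$ is a bounded operator $H^{-L}(\dom) \to H^1(\dom)$ with operator norm $C_\delta$ (because $\hat{\rho}_\delta$ is rapidly decreasing), so the estimates from the first step transfer to
\begin{equation*}
\sup_{n,t} \norm{v_n^\delta(t)}_{H^1(\dom)} \leq C_\delta, \qquad \norm{v_n^\delta(t) - v_n^\delta(s)}_{H^1(\dom)} \leq C_\delta |t-s|^\alpha.
\end{equation*}
Arzel\`a--Ascoli together with Rellich's compact embedding $H^1(\dom) \hookrightarrow\hookrightarrow L^2(\dom)$ then yields that $\{v_n^\delta\}_n$ is precompact in $C([0,T]; L^2(\dom))$, and hence in $L^2(0,T;L^2(\dom))$, for every fixed $\delta > 0$.

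A Cantor diagonal extraction along $\delta_k = 1/k$ produces a single subsequence $\{v_{n_j}\}$ such that $\{v_{n_j}^{\delta_k}\}_j$ converges in $L^2(0,T;L^2(\dom))$ for every $k$, and the triangle inequality
\begin{equation*}
\norm{v_{n_j} - v_{n_\ell}}_{L^2([0,T]\times \dom)} \leq \norm{v_{n_j} - v_{n_j}^{\delta_k}}_{L^2} + \norm{v_{n_j}^{\delta_k} - v_{n_\ell}^{\delta_k}}_{L^2} + \norm{v_{n_\ell}^{\delta_k} - v_{n_\ell}}_{L^2}
\end{equation*}
combined with $\Psi(\delta_k) \to 0$ and the Cauchy property at each fixed $\delta_k$ shows that $\{v_{n_j}\}$ is Cauchy, hence convergent, in $L^2(0,T;L^2(\dom))$. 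The main technical obstacle is the quantitative tension between the smallness of the mollification error $\Psi(\delta)$ and the blow-up of the $\delta$-dependent time equicontinuity constants $C_\delta$; this is precisely what the diagonal procedure is designed to sidestep, by avoiding any single optimal choice of $\delta$. The treatment of the sum space in the Sobolev embedding step is routine once a near-minimizer of the sum norm is fixed.
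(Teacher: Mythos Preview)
Your argument is correct and follows essentially the same route as the paper. The paper's own proof is extremely terse: it invokes the fractional Morrey (Sobolev) embedding to obtain the uniform $C^\alpha(0,T;H^{-L})$ bound from the sum-space hypothesis, exactly as you do, and then defers the $L^2(0,T;L^2)$-compactness step entirely to \cite[Proposition~2.10]{Lanthaler2021}. What you have written---spatial mollification with a uniformly small error controlled by the structure-function bound, Arzel\`a--Ascoli combined with Rellich on the mollified sequence, and a diagonal extraction---is precisely the standard mechanism behind that cited result, so you are simply unpacking the reference rather than taking a different path.
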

\begin{proof}
	We first note that by the fractional version of Morrey's inequality, see e.g.~\cite{DiNezza2012}, we have that $W^{\beta_1,p_1}(0,T;H^{-L}(\dom))+ \dots + W^{\beta_m,p_m}(0,T;H^{-L}(\dom))$ embeds into $C^\alpha(0,T;H^{-L}(\dom))$ for $0<\alpha\leq \min_{i=1,\dots,m}(\beta_i-1/p_i)$ since each of the $W^{\beta_i,p_i}(0,T;H^{-L}(\dom))$ do. Then the proof follows as in, e.g.~\cite[Proposition 2.10]{Lanthaler2021}.
\end{proof}
\begin{remark}
	Equivalently, in Proposition~\ref{prop:moctimedep}, we could assume
	\begin{equation*}
		\sup_{0<r\leq r_0} \frac{S_2^T(v_n,r)}{\phi(r)}\leq 1,
	\end{equation*}
	for some $r_0>0$, 
	for all $v_n$, $n\in \N$, since for large $r$, $S_2^T(v_n,r)$ is trivially bounded by twice the $L^2$-bound on $v_n$.
\end{remark}
\section{Energy balance}\label{sec:balance}
\subsection{A sufficient condition for an energy balance in mean}
Our goal is now to show that if the sequence of Navier-Stokes solutions $\{u_\nu\}_\nu$ satisfy a uniform condition on the modulus of continuity of the structure functions in mean,
\begin{equation}
	\label{eq:structurefcn}
	\E \sup_{0< r\leq r_0}\frac{S_2^T(u_\nu,r)^2}{\phi(r)^2} \leq C ,
\end{equation}
for some $C>0$, $r_0>0$,
uniformly in $\nu>0$ for some modulus of continuity $\phi$, then the sequence of laws of $u_\nu$, $\mu_\nu = \P\circ u_\nu^{-1}$ is tight and with Skorokhod's theorem, a random variable and a probability space can be identified which satisfies an energy balance in mean with respect to that probability measure. Conversely, if $u_\nu$ is a physical realization obtained through a sequence of solutions of the stochastic Navier-Stokes equations, and its limit $u$ satisfies an energy balance in mean, then the sequence of laws $\mu_\nu$ is tight on $L^2(0,T;L^2_{\Div}(\dom))$.  Since the condition~\eqref{eq:structurefcn} combined with the uniform weak continuity in time of the $u_\nu$ yield tightness in $L^2(0,T;\HH)$, this is a stochastic version of the reverse direction.
\begin{theorem}
	\label{prop:tightnesstoenergybalance}
	Let $f\in L^2(0,T;\HH)$. 
	Let $\{u_\nu\}_{\nu>0}$ be a sequence of stochastically strong solutions of the 2D stochastic Navier-Stokes equations~\eqref{SNS} with initial data $\bar{u}_{\nu}\in L^2(\Omega;\HH)$ and forcing term $f_\nu\in L^2(0,T;\HH)$. 
	Assume that as $\nu\to 0$, $\bar{u}_{\nu}\to \bar{u}$ in $L^2(\Omega;\HH)$ and that $f_\nu\weak f$ in $L^2(0,T;\HH)$. 
	Assume that the initial data satisfies uniformly in $\nu>0$ for some $p>2$,
	\begin{equation*}
		\E\norm{\bar{u}_\nu}_{L^2}^p\leq C<\infty,
	\end{equation*}
	and assume that $\bar{u}_\nu$ are measurable with respect to $\mathcal{G}_0$.
	Furthermore, assume that the $u_\nu$ satisfy, uniformly in $\nu$, 
	\begin{equation}
		\label{eq:weakstructurefcn2}
		\E \sup_{0<r\leq r_0}\frac{S_2^T(u_\nu(t),r)^2}{\phi(r)^2}  \leq C ,
	\end{equation}
	for some  monotonically increasing modulus of continuity $\varphi:[0,\infty)\to [0,\infty)$ an some $r_0>0$. Then a subsequence of the laws $\mu_\nu=\P\circ u_\nu^{-1}$  of $\{u_{\nu}\}_{\nu>0}$ converges weakly on $L^2([0,T];\HH)$ and there exists a probability space $(\widetilde{\Omega},\widetilde{\mathcal{F}},\{\widetilde{\mathcal{G}}_t\}_{t\in [0,T]},\widetilde{\P})$ and random variables $\widetilde{u}_\nu$ that have the same laws as $u_{\nu}$ and converge  $\widetilde{\P}$-a.s. to $\widetilde{u}$ which is a martingale solution of the stochastic Euler equations as in Definition~\ref{def:eulersol} that satisfies for almost every $t\in [0,T]$, the energy balance
	\begin{equation*}
		\widetilde{\E}\norm{\widetilde{u}(t)}^2_{L^2} = \widetilde{\E}\norm{\widetilde{\bar{u}}}_{L^2}^2 + 2 \widetilde{\E} \int_0^t(\widetilde{u},f) ds + \bar{\sigma} t.
	\end{equation*}
\end{theorem}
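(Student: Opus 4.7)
The plan is three-stage: (i) establish tightness of $\{\mu_\nu\}$ on $L^2(0,T;\HH)$ from the spatial structure-function hypothesis \eqref{eq:weakstructurefcn2} combined with fractional time regularity inherited from the Navier--Stokes equation; (ii) apply Skorokhod's representation to pass to a new stochastic basis where convergence is almost sure, and identify $\widetilde u$ as a martingale solution of \eqref{SEuler}; (iii) pass to the limit in the Navier--Stokes energy identity \eqref{eq:NSenergybalance} to recover the energy equality for $\widetilde u$.

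For step (i) I would apply Proposition~\ref{prop:moctimedep}. The spatial modulus of continuity is turned from an expected bound into a pathwise one by Markov's inequality applied to \eqref{eq:weakstructurefcn2}: for any $\delta>0$ there is $M(\delta)$ such that $\sup_{r\leq r_0}S_2^T(u_\nu,r)^2/\phi(r)^2\leq M(\delta)$ on a set of $\P$-mass $\geq 1-\delta$ uniformly in $\nu$. A uniform $L^2(0,T;\HH)$ bound in expectation follows from \eqref{eq:NSenergybalance} and Remark~\ref{rem:gronwallbound}. Time regularity is read directly off \eqref{SNS}: the convective, viscous and forcing contributions yield a $W^{1,1}(0,T;H^{-L}(\dom))$ bound after choosing $L$ large enough to accommodate $u_\nu\cdot\nabla u_\nu$, while the It\^o stochastic integral lies in $W^{\beta,p}(0,T;\HH)$ for every $\beta<1/2$ and $p<\infty$ by Burkholder--Davis--Gundy together with the trace bound $\bar\sigma<\infty$. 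A second Markov argument converts these expected bounds into probabilistic tightness on $L^2(0,T;\HH)$.

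Joint tightness with the Brownian motions on the Polish space $L^2(0,T;\HH)\times C([0,T])^{\N}$ lets Skorokhod's theorem produce a new basis $(\widetilde\Omega,\widetilde{\mathcal F},\widetilde\P)$ carrying $\widetilde u_\nu\sim u_\nu$, $\widetilde\beta_k^\nu\sim\beta_k$ and almost sure limits $\widetilde u_\nu\to \widetilde u$, $\widetilde\beta_k^\nu\to\widetilde\beta_k$. Taking $\widetilde{\mathcal G}_t$ to be the completed natural filtration of $(\widetilde u,\{\widetilde\beta_k\})$, I would pass term-by-term to the limit in \eqref{def NS mart sol}: the linear pieces by continuity, the nonlinear term by strong $L^2$ convergence of $\widetilde u_\nu$, the viscous term because $\sqrt{\nu}\,\|\Grad\widetilde u_\nu\|_{L^2_{t,x}}$ is uniformly bounded by \eqref{eq:NSenergybalance}, and the stochastic integral by matching laws together with the It\^o isometry in the spirit of~\cite{Bensoussan1973}. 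This realises $\widetilde u$ as a martingale solution in the sense of Definition~\ref{def:eulersol}.

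The main obstacle is the energy equality. Taking expectations in \eqref{eq:NSenergybalance} on the new basis, the initial datum term converges by hypothesis on $\bar u_\nu$ and the forcing term by the weak--strong pairing of $f_\nu\weak f$ in $L^2_{t,x}$ against the strongly converging $\widetilde u_\nu$. For $\widetilde\E\|\widetilde u_\nu(t)\|_{L^2}^2$, the almost sure strong $L^2(0,T;\HH)$ convergence gives, along a sub-subsequence, pointwise-in-$t$ strong $\HH$ convergence $\widetilde u_\nu(t)\to\widetilde u(t)$ for a.e.\ $t$ almost surely; combined with the uniform bound $\widetilde\E\sup_{t\leq T}\|\widetilde u_\nu(t)\|_{L^2}^p\leq C$ (coming from It\^o's formula, BDG and the $p$-th moment hypothesis on $\bar u_\nu$, with $p>2$), Vitali's theorem yields $\widetilde\E\|\widetilde u_\nu(t)\|_{L^2}^2\to \widetilde\E\|\widetilde u(t)\|_{L^2}^2$ for a.e.\ $t$. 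The hardest part is ruling out anomalous dissipation, i.e.\ showing $\nu\widetilde\E\int_0^t\|\Grad\widetilde u_\nu\|_{L^2}^2\,ds\to 0$. My approach is a stopping-time argument: on the event $\{\sup_{r\leq r_0}S_2^T(\widetilde u_\nu,r)^2/\phi(r)^2 + \sup_{t\leq T}\|\widetilde u_\nu(t)\|_{L^2}^2\leq M\}$, which carries $\widetilde\P$-mass at least $1-C/M$ uniformly in $\nu$, the sample paths satisfy the deterministic hypotheses of~\cite{Lanthaler2021}, so the Constantin--E--Titi--type commutator estimate of that work, controlled by $\phi$, furnishes pathwise no-anomalous-dissipation at this truncation level. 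First sending $\nu\to 0$ for fixed $M$, then using the uniform moment bounds to send $M\to\infty$, and finally invoking dominated convergence removes the truncation and delivers the desired mean energy equality.
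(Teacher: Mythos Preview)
Your steps (i) and (ii) match the paper's argument closely, and the overall architecture of step (iii) is right. The gap is in the mechanism you propose for killing the anomalous dissipation term $\nu\,\widetilde\E\int_0^t\|\Grad\widetilde u_\nu\|_{L^2}^2\,ds$.

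Your truncation set $A_\nu^M=\{\sup_r S_2^T(\widetilde u_\nu,r)^2/\phi(r)^2+\sup_t\|\widetilde u_\nu(t)\|_{L^2}^2\le M\}$ is $\nu$-dependent and is not a stopping time (it uses the whole trajectory through $S_2^T$ and $\sup_t$). Sending $\nu\to 0$ ``on this event'' is therefore not well-posed: you do not obtain, for a typical fixed $\omega$, a full \emph{sequence} $\{\widetilde u_\nu(\omega)\}_\nu$ satisfying the structure-function bound uniformly in $\nu$, which is what a pathwise application of the results of~\cite{Lanthaler2021,Jin2024} would require. Even setting that aside, the deterministic no-anomalous-dissipation argument works through the vorticity equation with forcing $\Curl f_\nu\in L^2_{t,x}$; here the vorticity carries an additional It\^o term $\Curl\sigma\cdot dW$, which is not an $L^2(0,T)$ function pathwise, so the deterministic lemmas do not apply verbatim to a fixed realisation. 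Finally, in both the deterministic and stochastic settings the vorticity bound used to start the argument blows up like $(\nu t)^{-1}$ at $t=0$ (since only $\bar u_\nu\in L^2$), so one cannot run the estimate on $[0,t]$; a shift to $[\delta,t]$ with a separate treatment of $[0,\delta]$ is required.

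The paper resolves this by working \emph{in mean} throughout rather than pathwise. It introduces a genuine stopping time $\tau^M=\inf\{t:\int_0^t\|\widetilde u\|_{L^2}^2\,ds\ge M\}\wedge T$ (adapted, not $\nu$-dependent in the limit), averages the Navier--Stokes energy identity against a test function $\theta(t)$ on $[\delta\wedge\tau^M,T\wedge\tau^M]$, and controls the dissipation term via two auxiliary lemmas: an enhanced Poincar\'e inequality in mean with stopping times (Lemma~\ref{lem:poincaremean}), which converts the expected structure-function bound into a superlinear relation between $\E\|\eta_\nu\|_{L^2_{t,x}}^2$ and $\E\|\Grad\eta_\nu\|_{L^2_{t,x}}^2$; and a vorticity bound $\E(T\wedge\tau\,\|\eta_\nu(t\wedge\tau)\|_{L^2}^2)\le C/(\nu t)$ (Lemma~\ref{lem:vorticitybound}), where the stochastic forcing in the vorticity equation is handled by the It\^o isometry. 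These feed into a short contradiction argument showing $\nu\,\widetilde\E\iint\|\eta_\nu\|_{L^2}^2\to 0$ for each fixed $\delta,M$; one then lets $\delta\downarrow 0$ using strong continuity of $\widetilde u$ at $t=0$, and finally $M\to\infty$ by monotone/dominated convergence since $\widetilde\P(\tau^M<T)\le C/M$.
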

To prove this, we need a couple of auxiliary lemmas whose deterministic versions were proved in~\cite{Lanthaler2021}. We show here how to modify them to hold in the stochastic case.
We start with the following enhanced Poincar\'e lemma which was proved in~\cite{Lanthaler2021}:
\begin{lemma}\label{lem:poincaremean}
	Let $v$ be a random variable on $(\Omega,\mathcal{F},(\mathcal{G}_t)_{t\in [0,T]},\P)$  taking values in $H^2(\dom)\cap \HH$ almost surely and denote $\w: = \Curl v$. Assume that 
	\begin{equation}\label{eq:ass1}
		\left(\E S_2^T(v;r)^2\right)^{1/2}\leq C \phi(r),\quad \forall \, 0\leq r\leq r_0
	\end{equation}
	for some monotonically increasing modulus of continuity and a suitable $r_0>0$. Furthermore, let $\tau$ be a stopping time that is bounded almost surely, i.e., $\tau\leq C$ a.s. for some $C>0$. Then for any $\delta>0$
	\begin{equation}
		\label{eq:poincare}
	\left(\E\int_{\delta\wedge \tau}^{T\wedge \tau}\!\!\int_{\delta\wedge \tau}^{t\wedge\tau} \norm{ \w_\nu}^2_{L^2} \, dsdt\right)^2 \gamma\left(\E\int_{\delta\wedge \tau}^{T\wedge \tau}\!\!\int_{\delta\wedge \tau}^{t\wedge\tau} \norm{ \w_\nu}^2_{L^2} \, dsdt\right) \leq\E\int_{\delta\wedge \tau}^{T\wedge \tau}\!\!\int_{\delta\wedge \tau}^{t\wedge\tau} \norm{\Grad \w_\nu}^2_{L^2} \, dsdt, 
	\end{equation}
	where $\gamma:\R_{\geq 0}\to\R_{\geq 0}$ is a continuous monotonically increasing function with the property that $\gamma(y)\geq \gamma_0>0$ for all $y\geq 0$ and $\gamma(y)\to \infty$ as $y\to \infty$.
\end{lemma}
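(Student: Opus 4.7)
The plan is to combine a Fourier-space decomposition of $\|\w\|_{L^2}^2$ with the structure-function hypothesis~\eqref{eq:ass1} to produce a frequency-splitting bound on $\|\w\|_{L^2}^2$ pointwise in time, and then to optimize over the cutoff after inserting the double-truncated time integral and taking expectation. The stopping time is needed only to ensure finiteness of both sides of~\eqref{eq:poincare} so that Fubini and the eventual balancing are legitimate.

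First I would exploit the identity
\[
\int_\dom |v(x+h) - v(x)|^2\,dx = 2\sum_{k\in\Z^2}(1-\cos(k\cdot h))|\hat v(k)|^2,
\]
integrate in $h\in B_r(0)$, and compute the weight $w_r(k) := \int_{B_r}(1-\cos(k\cdot h))\,dh$. In $d=2$ this satisfies $w_r(k) \gtrsim r^{4}|k|^2$ for $|k|r\leq 1$. Since $v$ is divergence free, $|\hat\w(k)| = |k|\,|\hat v(k)|$; setting $r = 1/N$ and invoking~\eqref{eq:ass1} gives
\[
\E\int_0^T\|P_{<N}\w(s)\|_{L^2}^2\,ds \leq C N^{4}\phi(1/N)^2, \qquad N\geq 1/r_0,
\]
where $P_{<N}$ is the Littlewood--Paley projection onto frequencies $|k|<N$. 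The elementary Bernstein estimate $\|P_{\geq N}\w\|_{L^2}^2 \leq N^{-2}\|\Grad\w\|_{L^2}^2$ is pointwise in $s$ and in the probability variable. Adding the two bounds, inserting into the indicator $\mathbf{1}_{\{\delta\wedge\tau\leq s\leq t\leq T\wedge\tau\}}$, using $\int_a^b\int_a^t F\,dsdt\leq T\int_a^b F\,ds$ and Fubini (legitimate because $\tau\leq C$ a.s.), yields, with $Y$ and $Z$ denoting the LHS- and RHS-integrals of~\eqref{eq:poincare},
\[
Y \leq C' N^{4}\phi(1/N)^2 + N^{-2} Z, \qquad N\geq 1/r_0.
\]

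Finally I would optimize in $N$. Setting $\Psi(N) := N^{4}\phi(1/N)^2$, which after replacing $\phi(r)$ by $\phi(r) + r^{1/2}$ (still a modulus of continuity) may be taken strictly increasing to $+\infty$ on $[1/r_0,\infty)$, the balancing choice $N = \Psi^{-1}(Y/(2C'))$ produces $Y\,\Psi^{-1}(Y/(2C'))^{2} \leq 2Z$. Combining this with an a priori bound $Y\leq M$ (available from $\tau\leq C$ a.s.\ together with the fact that~\eqref{eq:ass1} and $v\in L^2(\Omega;H^2\cap\HH)$ force $\E\int_0^T\|\w\|_{L^2}^2\,ds<\infty$), define
\[
\gamma(y) := \tfrac{1}{M}\,\Psi^{-1}(y/(2C'))^{2},
\]
extended continuously by a positive constant $\gamma_0$ in the low-$y$ regime where the optimizer would fall below $1/r_0$. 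Then $Y^2\gamma(Y) \leq Y\,\Psi^{-1}(Y/(2C'))^{2}\leq 2Z$, and $\gamma(y)\to\infty$ as $y\to\infty$ by monotonicity of $\Psi^{-1}$.

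The main technical obstacle is the two-regime analysis for the cutoff $N$ (whether the balancing value satisfies $N\geq 1/r_0$ or not) and gluing the two branches of $\gamma$ into a single continuous, strictly positive, unbounded function with the claimed properties. Conceptually, the stochastic nature of the lemma enters only through the expectations wrapping the Fourier estimates, so the adaptation of the deterministic argument in~\cite{Lanthaler2021} is routine once~\eqref{eq:ass1} is interpreted in mean and the stopping time $\tau$ is used to secure the integrability needed for Fubini and the final balancing step.
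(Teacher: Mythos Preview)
Your frequency-splitting via the Fourier representation of $S_2$ is a legitimate alternative to the paper's physical-space Taylor expansion; both routes yield the same pointwise-in-$(t,\omega)$ inequality
\[
\|\w\|_{L^2}^2 \;\lesssim\; r^{-4}\,S_2(v;r)^2 + r^{2}\,\|\nabla\w\|_{L^2}^2,
\]
which after integrating against the stopped indicator and taking expectation produces your $Y\leq C'N^{4}\phi(1/N)^2 + N^{-2}Z$. The paper arrives at the same splitting by writing $h\cdot\nabla v(x)=v(x+h)-v(x)-\int_0^1(1-t)(h\otimes h):\nabla^2 v(x+th)\,dt$, squaring, and averaging in $h$; your Littlewood--Paley version is equally clean and arguably more transparent about where the exponents come from.

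The gap is in the construction of $\gamma$. You balance the first right-hand term against $Y/2$, obtain $Y\cdot\Psi^{-1}(Y/(2C'))^2\leq 2Z$, and then invoke an a priori bound $Y\leq M$ to force this into the form $Y^2\gamma(Y)\leq 2Z$ with $\gamma(y)=\tfrac{1}{M}\Psi^{-1}(y/(2C'))^2$. Two problems. First, the justification you give for $M<\infty$ assumes $v\in L^2(\Omega;H^2\cap\HH)$, which is not among the hypotheses (only a.s.\ membership is); one can salvage finiteness of $Y$ by noting that the inequality is vacuous when $Z=\infty$ and that $Z<\infty$ forces $Y<\infty$ via the splitting itself, but then $M$ depends on $Z$. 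Second, and more seriously, the resulting $\gamma$ depends on $M$ and hence on the particular $v$; while the lemma as literally stated is about a fixed $v$, its role in Theorem~\ref{prop:tightnesstoenergybalance} requires a single $\gamma$, depending only on $\phi$, that works uniformly for the whole family $\{u_\nu\}_{\nu>0}$. The paper secures this by balancing the two right-hand terms against \emph{each other} (rather than against $Y$), yielding $Y\leq C\phi(\bar r)\,Z^{1/2}$ with $\bar r\asymp Z^{-1/4}$, and then invoking \cite[Lemma~C.1]{Lanthaler2021}: from $f(z)=\widetilde\phi(z^{-1/4})z=o(z)$ one builds a strictly increasing dominating $F$ whose inverse has the form $F^{-1}(y)=\gamma(\sqrt y)y$ with $\gamma\nearrow\infty$. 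That $\gamma$ depends only on $\phi$.
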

\begin{proof}
	The key idea of the proof is Lemma 2.6 in~\cite{Lanthaler2021}, i.e., we have to show that for a function $v\in H^2(\dom)\cap \HH$ with $\Curl v = \w$ we have
	\begin{equation}
		\label{eq:ineqformega}
	\norm{\w}_{L^2}^2 \leq 2 S_2(v;r)^2+ C r^2  \norm{\Grad \w}_{L^2}^2,
		\end{equation}
	To show this inequality, we assume that $v$ is smooth, then the result follows by using the density of smooth functions in $H^2(\dom)\cap \VV$. By Taylor's theorem, we have for $h\in \R^2$
	\begin{equation*}
		h\cdot \Grad v(x) = v(x+h)-v(x) - \int_0^1 (1-t) (h\otimes h ): \Grad^2 v(x+th) dt.
	\end{equation*}
	By triangle inequality, we have
	\begin{equation*}
		|h\cdot \Grad v(x)|^2 \leq  2|v(x+h)-v(x)|^2 + 2\left|\int_0^1 (1-t) h\otimes h : \Grad^2 v(x+th) dt\right|^2.
	\end{equation*}
	Now we integrate over $B_r(0)\times \dom$ and divide by $|B_r(0)|$:
	\begin{align*}
		\int_{\dom}\fint_{B_{r}(0)}|h\cdot \Grad v(x)|^2dh dx &\leq 2\int_{\dom}\fint_{B_{r}(0)} |v(x+h)-v(x)|^2dh dx + 2\int_{\dom}\fint_{B_{r}(0)}\left|\int_0^1 (1-t) h\otimes h : \Grad^2 v(x+th) dt\right|^2dh dx\\
		& \leq 2 S_2(v;r)^2+ 2\int_{\dom}\fint_{B_{r}(0)} |h|^4 |\Grad^2 v(x)|^2dh dx\\
		& \leq 2 S_2(v;r)^2+ C r^4  \norm{\Grad^2 v}_{L^2}^2.
	\end{align*}
	We note that due to the periodic boundary conditions and the fact that $v$ is divergence free, we have $\norm{\Grad^2 v}_{L^2}= \norm{\Delta v}_{L^2} = \norm{\Curl \w}_{L^2}=\norm{\Grad\w}_{L^2}$. Furthermore, we can rewrite the integrand on the left hand side as follows using spherical coordinates:
	\begin{align*}
		\fint_{B_{r}(0)}|h\cdot \Grad v(x)|^2 dh & = \frac{1}{\pi r^2}\int_0^r\int_0^{2\pi}s\left[(s\sin\theta\partial_1 v^1+s\cos\theta\partial_2 v^1)^2 + (s\sin\theta\partial_1v^2 +s\cos\theta\partial_2 v^2)^2\right]d\theta ds\\
		&=\frac{1}{\pi r^2}\int_0^r s^3 \int_0^{2\pi}\Bigg[ \left(\frac12-\frac{\cos(2\theta)}{2}\right)\left((\partial_1 v^1)^2+(\partial_1 v^2)^2\right)+\left(\frac12+\frac{\cos(2\theta)}{2}\right)\left((\partial_2 v^1)^2+(\partial_2 v^2)^2\right)\\
		&\quad\quad +\sin(2\theta)(\partial_1 v^1\partial_2 v^1 +\partial_1 v^2\partial_2 v^2)\Bigg] d\theta d s\\
		&=\frac{1}{r^2}|\Grad v(x)|^2\int_0^r s^3 ds = \frac{r^2}{4}|\Grad v(x)|^2.
	\end{align*}
	Thus we obtain
	\begin{equation*}
		\frac{r^2}{4}\norm{\w}_{L^2}^2 = \frac{r^2}{4}\norm{\Grad v}_{L^2}^2\leq 2 S_2(v;r)^2+ C r^4  \norm{\Grad \w}_{L^2}^2,
	\end{equation*}
	which proves~\eqref{eq:ineqformega}. Note that this identity is independent of time and realization of the random event $\omega$. Thus after dividing by $r^2/4$, we integrate the above identity in time on $[\delta\wedge \tau,t\wedge \tau]$ and then on $[\delta\wedge \tau, T\wedge \tau]$ and take expectations to obtain
	\begin{equation}\label{eq:notsure}
		\begin{split}
	\E\int_{\delta\wedge\tau}^{T\wedge \tau}\int_{\delta\wedge\tau}^{t\wedge \tau}\norm{\w}_{L^2}^2 ds dt&\leq \frac{8}{r^2}\E\int_{\delta\wedge\tau}^{T\wedge \tau}\int_{\delta\wedge\tau}^{t\wedge \tau}S_2(v;r)^2ds dt + 4 C r^2 \E\int_{\delta\wedge\tau}^{T\wedge \tau}\int_{\delta\wedge\tau}^{t\wedge \tau} \norm{\Grad \w}_{L^2}^2 dsdt \\
	&\leq  \frac{C \phi(r)^2}{r^2} + C r^2 \E\int_{\delta\wedge\tau}^{T\wedge \tau}\int_{\delta\wedge\tau}^{t\wedge \tau} \norm{\Grad \w}_{L^2}^2 dsdt .				
		\end{split}
	\end{equation}
	We choose $r$ to balance terms, c.f.~\cite[p. 1094]{Lanthaler2021},
	\begin{equation*}
		r = \frac{\phi(\bar{r})^{1/2}}{\left(\E\int_{\delta\wedge\tau}^{T\wedge \tau}\int_{\delta\wedge\tau}^{t\wedge \tau} \norm{\Grad \w}_{L^2}^2 dsdt\right)^{1/4} }, \quad \text{where }\quad \bar{r}: = \frac{\bar{\phi}^{1/2}}{\left(\E\int_{\delta\wedge\tau}^{T\wedge \tau}\int_{\delta\wedge\tau}^{t\wedge \tau} \norm{\Grad \w}_{L^2}^2 dsdt \right)^{1/4}},
	\end{equation*}
	and $\bar{\phi}>0$ provides an upper bound $\phi(r)\leq \bar{\phi}$.  Then~\eqref{eq:notsure} can be bounded as (recall that $\phi$ is monotonically increasing)
	\begin{equation}\label{eq:prelim}
\E\int_{\delta\wedge\tau}^{T\wedge \tau}\int_{\delta\wedge\tau}^{t\wedge \tau} \norm{ \w}_{L^2}^2 dsdt \leq C \phi(\bar{r})\left(\E\int_{\delta\wedge\tau}^{T\wedge \tau}\int_{\delta\wedge\tau}^{t\wedge \tau} \norm{\Grad \w}_{L^2}^2 dsdt \right)^{1/2}.
	\end{equation}
	For simplicity, we denote $\widetilde{\phi}(r) = (C\phi(\bar{\phi}^{1/2}r))^2$ which is still a monotonically increasing function of $r$ and $\widetilde{\phi}(0)=0$, thus it is a modulus of continuity. Using this on the right hand side of~\eqref{eq:prelim} yields
	\begin{equation}\label{eq:prelim2}
			\left(\E\int_{\delta\wedge\tau}^{T\wedge \tau}\int_{\delta\wedge\tau}^{t\wedge \tau} \norm{ \w}_{L^2}^2 dsdt\right)^2 \leq \widetilde{\phi}\left(\left(\E\int_{\delta\wedge\tau}^{T\wedge \tau}\int_{\delta\wedge\tau}^{t\wedge \tau} \norm{\Grad \w}_{L^2}^2 dsdt\right)^{-1/4}\right)\E\int_{\delta\wedge\tau}^{T\wedge \tau}\int_{\delta\wedge\tau}^{t\wedge \tau} \norm{\Grad \w}_{L^2}^2 dsdt.
	\end{equation}
We define $f(z)= \widetilde{\phi}(z^{-1/4})z$.  Clearly, we have $f(z)=o(z)$ as $z\to\infty$, because $\widetilde{\phi}$ is a modulus of continuity, and goes to zero as $z\to\infty$. From~\cite[Lemma C.1]{Lanthaler2021}, we obtain that there is a dominating function $F(z)\geq f(z)$ still satisfying  $F(z)=o(z)$, which is continuous and stricty monotonically increasing and thus has an inverse $F^{-1}$ which grows superlinearly at infinity and can be represented in the form $F^{-1}(y)=\gamma(\sqrt{y})y$ where $\gamma:\R_{\geq 0}\to\R_{\geq 0}$ is a continuous monotonically increasing function with the property that there exists $\gamma_0>0$ such that $\gamma(\sqrt{y})\geq \gamma_0>0$ for all $y\geq 0$ and $\gamma(\sqrt{y})\to \infty$ as $y\to\infty$.  Thus, using this result, we obtain from~\eqref{eq:prelim2},
\begin{equation*}
		\left(\E\int_{\delta\wedge\tau}^{T\wedge \tau}\int_{\delta\wedge\tau}^{t\wedge \tau} \norm{ \w}_{L^2}^2 dsdt\right)^2 \gamma\left(\E\int_{\delta\wedge\tau}^{T\wedge \tau}\int_{\delta\wedge\tau}^{t\wedge \tau} \norm{\w}_{L^2}^2 dsdt\right) \leq\E\int_{\delta\wedge\tau}^{T\wedge \tau}\int_{\delta\wedge\tau}^{t\wedge \tau} \norm{\Grad \w}_{L^2}^2 dsdt.
\end{equation*}
	which proves the claim.
\end{proof}
Next, we prove a bound on the vorticity that is independent of the initial vorticity $\bar{\eta}_\nu=\Curl \bar{u}_\nu$ (which might blow with $\nu$.) The proof is an adaption of~\cite[Lemma 3.7]{Jin2024} to the stochastic setting.
\begin{lemma}
	\label{lem:vorticitybound}
	Let $u_\nu$ be a solution of the stochastic Navier-Stokes equations~\eqref{SNS} with forcing $f_\nu\in L^2(0,T;\HH)$ and initial data $\bar{u}_\nu\in L^2(\Omega;\HH)$ as in Definition~\ref{def:strongsolNS}. Let $\tau$ be a stopping time that is almost surely bounded and positive, i.e. $0<\tau\leq c$ a.s., and satisfies $\E t\wedge \tau>0$ for any $t>0$.  Then there exists a constant $C>0$ depending on $\norm{f_\nu}_{L^2(0,T;\HH)}$, $\E\norm{\bar{u}_\nu}_{L^2}^2$ and $\bar{\rho}$ and $T$  such that the vorticity $\w_\nu = \Curl u_\nu$ satisfies
	\begin{equation*}
	\E \left(T\wedge \tau \norm{\w_\nu(t\wedge \tau)}_{L^2}^2\right)\leq \frac{1}{\nu t} C(f,T,\bar{\sigma},\bar{\rho}).
	\end{equation*}
\end{lemma}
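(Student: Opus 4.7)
The plan is to apply It\^o's formula to $t\norm{\w_\nu(t)}_{L^2}^2$, where $\w_\nu=\Curl u_\nu$ solves the vorticity formulation of the Navier--Stokes equations, and to bound the resulting space--time integral $\E\int_0^{t\wedge\tau}\norm{\w_\nu}_{L^2}^2 ds$ using the Navier--Stokes energy balance \eqref{eq:NSenergybalance} for the velocity. This second step is where the factor $1/\nu$ enters and, crucially, produces an estimate independent of $\Curl\bar u_\nu$, which may be unbounded.

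Concretely, I would first take the curl of \eqref{SNS} to obtain
\begin{equation*}
d\w_\nu+(u_\nu\cdot\Grad)\w_\nu\,dt=\nu\Delta\w_\nu\,dt+\Curl f_\nu\,dt+\Curl\sigma\cdot dW,
\end{equation*}
where the driving noise has It\^o trace $\bar\rho=\sum_k b_k^2\norm{\Curl e_k}_{L^2}^2$ by assumption \eqref{eq:bounded trace}. Applying It\^o's formula to $\norm{\w_\nu}_{L^2}^2$ together with the cancellation $((u_\nu\cdot\Grad)\w_\nu,\w_\nu)=0$ (from $\Div u_\nu=0$) gives
\begin{equation*}
d\norm{\w_\nu}_{L^2}^2=\bigl(-2\nu\norm{\Grad\w_\nu}_{L^2}^2+2(\w_\nu,\Curl f_\nu)+\bar\rho\bigr)dt+2(\w_\nu,\Curl\sigma\cdot dW).
\end{equation*}
Weighing by the deterministic factor $t$, using the product rule $d(t\norm{\w_\nu}_{L^2}^2)=\norm{\w_\nu}_{L^2}^2 dt+t\,d\norm{\w_\nu}_{L^2}^2$, integrating up to $t\wedge\tau$ and taking expectations (the stochastic integral is a zero-mean martingale after a standard stopping-time localization, since $\tau$ is a.s.\ bounded) yields
\begin{equation*}
\E\bigl(t\wedge\tau\,\norm{\w_\nu(t\wedge\tau)}_{L^2}^2\bigr)+2\nu\E\!\int_0^{t\wedge\tau}\!\! s\norm{\Grad\w_\nu}_{L^2}^2 ds=\E\!\int_0^{t\wedge\tau}\!\!\norm{\w_\nu}_{L^2}^2 ds+2\E\!\int_0^{t\wedge\tau}\!\! s(\w_\nu,\Curl f_\nu)ds+\tfrac{\bar\rho}{2}\E(t\wedge\tau)^2.
\end{equation*}

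The forcing term is handled by integration by parts, $(\w_\nu,\Curl f_\nu)=-(\Grad^\perp\w_\nu,f_\nu)$, and Young's inequality with weight $\nu$, producing $\nu t\norm{\Grad\w_\nu}_{L^2}^2+\frac{t}{\nu}\norm{f_\nu}_{L^2}^2$; the first half is absorbed into the dissipation on the left. For the space--time vorticity integral, I would invoke \eqref{eq:NSenergybalance} combined with Remark~\ref{rem:gronwallbound} to obtain $\nu\E\int_0^T\norm{\Grad u_\nu}_{L^2}^2 ds\leq C(\E\norm{\bar u_\nu}_{L^2}^2,\norm{f_\nu}_{L^2(0,T;\HH)},\bar\sigma,T)$, and then use the 2D periodic divergence-free identity $\norm{\Grad u_\nu}_{L^2}^2=\norm{\w_\nu}_{L^2}^2$ to convert this into $\E\int_0^T\norm{\w_\nu}_{L^2}^2 ds\leq C/\nu$, which delivers the claimed bound.

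The main difficulty is the justification of It\^o's formula applied to $\norm{\w_\nu}_{L^2}^2$: the vorticity only lies in $L^2(0,T;L^2(\dom))$ a.s., which is exactly the critical regularity, so the formula should be obtained either by regularization (mollifying $u_\nu$ and using that 2D Navier--Stokes instantly smooths data to pass to the limit) or by an SPDE-valued It\^o formula of Krylov--Rozovskii type in the Gelfand triple $\VV\subset\HH\subset\VV^*$ adapted to the vorticity. A secondary point is verifying the martingale property of the stochastic integral $\int_0^{t\wedge\tau} s(\w_\nu,\Curl\sigma\cdot dW)$; this follows from a localization in $\tau$ together with the already-obtained space--time $L^2$-bound on $\w_\nu$ and the trace condition $\bar\rho<\infty$.
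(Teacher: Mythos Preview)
Your argument is correct and follows the same strategy as the paper: weight the vorticity energy identity by the time variable to kill the (possibly infinite) initial vorticity, then control the resulting $\E\int_0^{t\wedge\tau}\norm{\w_\nu}_{L^2}^2\,ds$ via the velocity energy balance and the identity $\norm{\Grad u_\nu}_{L^2}=\norm{\w_\nu}_{L^2}$. The paper obtains the time weight by writing the vorticity balance between $r\wedge\tau$ and $t\wedge\tau$ and integrating in $r$ over $[0,t\wedge\tau]$, whereas you apply the product rule to $s\mapsto s\norm{\w_\nu(s)}_{L^2}^2$; these are equivalent, and your version is in fact slightly cleaner because the stochastic integral $\int_0^{t\wedge\tau}2s(\w_\nu,\Curl\sigma\cdot dW)$ is a genuine It\^o integral with zero mean (using $\bar\rho<\infty$ and $\E\int_0^T\norm{\w_\nu}_{L^2}^2\,ds\leq C/\nu$), while the paper's double integral in $r$ and $s$ forces an extra Cauchy--Schwarz/It\^o isometry estimate. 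For the justification of the It\^o formula at the vorticity level the paper uses a Galerkin approximation rather than Krylov--Rozovskii, but either works.

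One small step is missing at the very end: your computation yields $\E\bigl[(t\wedge\tau)\norm{\w_\nu(t\wedge\tau)}_{L^2}^2\bigr]\leq C/\nu$, whereas the stated bound has the weight $T\wedge\tau$ and an extra factor $1/t$. To close this, use that $\tau>0$ a.s.\ and $0<t\leq T$ imply $\frac{t\wedge\tau}{T\wedge\tau}\geq\frac{t}{T}$ pointwise, hence $(T\wedge\tau)\norm{\w_\nu(t\wedge\tau)}_{L^2}^2\leq \frac{T}{t}(t\wedge\tau)\norm{\w_\nu(t\wedge\tau)}_{L^2}^2$, and take expectations.
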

\begin{proof}
	Since $u_\nu$ is a solution of~\eqref{SNS}, we obtain from~\cite[Proposition 2.4.8]{Kuksin2012}, 
	that	 it satisfies an energy balance in mean, which we rewrite for any $t\in [0,T]$ as
		\begin{equation}\label{eq:EB}
		\begin{split}
			\nu \E \int_0^{t\wedge \tau} \norm{\w_\nu}_{L^2}^2 dt &=\frac12\E\norm{\bar{u}_\nu}_{L^2}^2 -\frac12\E\norm{u_\nu(t\wedge \tau)}_{L^2}^2 + \E\int_0^{t\wedge \tau} (f_\nu,u_\nu) dt + \frac12\bar{\sigma} \E t\wedge \tau\\
			& \leq\frac12\E\norm{\bar{u}_\nu}_{L^2}^2 -\frac12\E\norm{u_\nu(t\wedge \tau)}_{L^2}^2 + \frac12\norm{f_\nu}_{L^1(0,T;L^2(\dom))}^2+\frac12\sup_{s\in [0,t]}\E\norm{u_\nu(s)}_{L^2}^2 + \frac12\bar{\sigma} t\\
			&\leq C(\E\norm{\bar{u}_\nu}_{L^2}^2, \norm{f_\nu}_{L^1(0,T;L^2(\dom))},\bar{\sigma},T).
		\end{split}
	\end{equation}
Next, we consider the energy balance for $\w_\nu$: Since a priori, we can only expect $\w_\nu\in L^2(0,T;\HH)$ from the energy balance for $u_\nu$,~\eqref{eq:EB}, we consider a Galerkin approximation in which we will pass to the limit at the end. So let $\{e_i\}_{i=1}^\infty$ the previous basis of smooth Stokes eigenfunctions of $\HH$ used to define $\sigma\cdot  W$ and define as in~\cite[Theorem 3.1]{Flandoli1995} or~\cite[Proposition 2.1]{Bessaih1999} the Galerkin projection
\begin{equation}
	\label{eq:galerkinprojection}
	P_m v = \sum_{j=1}^m (v,e_i) e_i.
\end{equation}
Then, let us consider a Galerkin approximation of $u_\nu$ ($\w_\nu$ respectively)  denoted by $u^m_\nu$ ($\w_\nu^m$ respectively) where $m\in \N$ is the number of basis functions, i.e.,
\begin{equation*}
(d u_\nu^m , e_j)+\int_{\dom} \left((u^m_\nu \cdot \Grad) u^m_\nu\right) \cdot e_j dx dt +\nu \int_{\dom} \Grad u_\nu^m : \Grad e_j dx dt  = (f_\nu, e_j) dxdt + b_j d\beta_j(t), 
\end{equation*}
which translates to
\begin{multline}\label{eq:Galerkinforeta}
	(d \w_\nu^m ,\Curl  e_j)+\int_{\dom} \left((u^m_\nu \cdot \Grad) \w^m_\nu\right) \cdot \Curl e_j dx dt +\nu \int_{\dom} \Grad \w_\nu^m : \Grad \Curl e_j dx dt \\
	 = (\Curl f_\nu,\Curl  e_j) dt + b_j \norm{\Curl e_j}_{L^2}^2 d\beta_j(t), 
\end{multline}
for $\w_\nu^m$. We denote $\w_\nu^m(t,x) = \sum_{i=1}^m \w_i^m(t) \Curl e_i(x)$ and $f_\nu^m := P_m f_\nu$ as well as $\sigma^m dW: = P_m \sigma dW$. 
Next, we use It\^o's formula for $F(\w^m_\nu)=\norm{\w^m_\nu}_{L^2}^2$ (c.f.~\cite[Lemma 4.24, Theorem 4.32]{DaPrato2014})
\begin{align}\label{eq:energyvorticity}
	\begin{split}
	\norm{\w_\nu^m(t\wedge \tau)}_{L^2}^2 &= \norm{\w^m_\nu(r\wedge \tau)}_{L^2}^2 -2\nu \int_{r\wedge \tau}^{t\wedge \tau}\norm{\Grad \w^m_\nu}_{L^2}^2 ds + 2\int_{r\wedge \tau}^{t\wedge \tau}(\Curl f^m_\nu,\w_\nu^m) ds \\
	&\quad + \norm{\Curl \sigma^m}_{L^2}^2 (t\wedge \tau-r\wedge \tau) +2 \int_{r\wedge \tau}^{t\wedge \tau} \w_\nu^m \Curl \sigma^m dW(s)\\
	& \leq    \norm{\w^m_\nu(r\wedge \tau)}_{L^2}^2 -\nu \int_{r\wedge \tau}^{t\wedge \tau}\norm{\Grad \w_\nu^m}_{L^2}^2 ds + \frac{1}{\nu}\int_{r\wedge \tau}^{t\wedge \tau}\norm{f^m_\nu}_{L^2}^2 ds \\
	&\quad +\norm{\Curl \sigma^m}_{L^2}^2 t\wedge \tau +2 \int_{r\wedge \tau}^{t\wedge \tau} \w_\nu^m \Curl \sigma^m  dW(s).
\end{split}
\end{align}
We integrate this over $r\in [0,t\wedge \tau]$ and take expectations,
\begin{align}\label{eq:whatamess}
	\begin{split}
	\E \left(t\wedge \tau \norm{\w_\nu^m(t\wedge \tau)}_{L^2}^2\right) &\leq  \E\int_0^{t\wedge \tau }\norm{\w_\nu^m(r\wedge \tau)}_{L^2}^2dr  -\nu \E\int_0^{t\wedge \tau}\int_{r\wedge \tau}^{t\wedge \tau}\norm{\Grad \w_\nu^m}_{L^2}^2 ds dr + \frac{1}{\nu}\E\int_0^{t\wedge \tau}\int_{r\wedge \tau}^{t\wedge \tau}\norm{f^m_\nu}_{L^2}^2 ds dr \\
	&\quad + \norm{\Curl \sigma^m}_{L^2}^2 \E (t\wedge \tau)^2 +2\E\int_0^{t\wedge \tau} \int_{r\wedge \tau}^{t\wedge \tau} \w_\nu^m \Curl \sigma^m dW(s)dr \\
	& \leq \E\int_0^{t\wedge \tau }\norm{\w_\nu^m(r)}_{L^2}^2dr  + \frac{t}{\nu}\norm{f^m_\nu}_{L^2(0,T;\HH)}^2+\bar{\rho} t^2\\
	&\quad  +2\E\int_0^{t\wedge \tau}\!\! \int_{r\wedge \tau}^{t\wedge \tau} \w_\nu^m \Curl \sigma^m dW(s)dr.
	\end{split}
\end{align}
We need to estimate the last term, the stochastic integral. We can rewrite and bound it as follows, using first the Cauchy-Schwarz inequality and then It\^o isometry~\cite[Equation (4.30)]{DaPrato2014}:
\begin{align*}
	\left|\E\int_0^{t\wedge \tau}\!\! \int_{r\wedge \tau}^{t\wedge \tau} \w_\nu^m \Curl \sigma^m dW(s)dr\right|& = \left|\int_0^{t}\E\mathbf{1}_{[0,t\wedge \tau]}(r) \int_{r\wedge \tau}^{t\wedge \tau} \w_\nu^m \Curl \sigma^m dW(s)dr\right|\\
	& \leq \int_0^t \left(\E \mathbf{1}_{[0,t\wedge \tau]}(r)\right)^{1/2}\left(\E\left(\int_{0}^{t} \mathbf{1}_{[r\wedge \tau,t\wedge \tau]}(s)\w_\nu^m(s) \Curl \sigma^m dW(s)\right)^2\right)^{1/2} dr \\
	& \leq \int_0^t \left(\E \int_0^t \mathbf{1}_{[r\wedge \tau,t\wedge \tau]}(s)\sum_{k=1}^m b_k^2 | (\w_\nu^m,\Curl e_k)|^2 ds\right)^{1/2} dr \\
	& \leq \int_0^t \left(\E\int_{r\wedge \tau}^{t\wedge \tau}\norm{\w_\nu^m}_{L^2}^2 ds  \right)^{1/2}dr \left(\sum_{k=1}^\infty b_k^2 \norm{\Curl e_k}_{L^2}^2 \right)^{1/2}\\
	& \leq \bar{\rho}^{1/2} t \left(\E\int_{0}^{t\wedge \tau}\norm{\w_\nu^m}_{L^2}^2 ds  \right)^{1/2}\\
	& \leq \frac{\bar{\rho}^{1/2} t}{\nu} C(\E\norm{\bar{u}_\nu}_{L^2}^2, \norm{f_\nu}_{L^1(0,T;L^2(\dom))},\bar{\sigma},T),
\end{align*}
by~\eqref{eq:EB} (which can be repeated for the Galerkin approximation and yields the same, uniform in $m$ bound). Thus, we can estimate~\eqref{eq:whatamess} as follows:
\begin{equation*}
		\E \left(t\wedge \tau \norm{\w_\nu^m(t\wedge \tau)}_{L^2}^2\right)\leq \frac{1}{\nu } C(f,T,\bar{\sigma},\bar{\rho}).
\end{equation*}
Next, since  
\begin{equation*}
	\frac{t\wedge \tau}{T\wedge \tau}\geq \frac{t}{T},\quad \text{a.s.},
\end{equation*}
since $\tau>0$ a.s. and $0<t\leq T$, we can lower bound this by
\begin{equation*}
	\frac{t}{T}\E \left(T\wedge \tau \norm{\w_\nu^m(t\wedge \tau)}_{L^2}^2\right)\leq \E \left(t\wedge \tau \norm{\w_\nu^m(t\wedge \tau)}_{L^2}^2\right)\leq \frac{1}{\nu } C(f,T,\bar{\sigma},\bar{\rho}).
\end{equation*}
Thus, we obtain  after multiplying this inequality by $\frac{T}{t}$
\begin{equation*}
\E \left(T\wedge \tau \norm{\w_\nu^m(t\wedge \tau)}_{L^2}^2\right)\leq \frac{1}{\nu t} C(f,T,\bar{\sigma},\bar{\rho}).
\end{equation*}
Note that this bound is uniform in $m\in \N$. Thus, we can pass $m\to \infty$ and obtain the result for the limit $\w_\nu$. That $u^m_\nu \to u_\nu$ and $\w^m_\nu\to \w_\nu$ in a unique manner follows as in~\cite[Proposition 2.1]{Bessaih1999}.

\end{proof}
Now we are ready to prove Proposition~\ref{prop:tightnesstoenergybalance}.
\begin{proof}[Proof of Proposition~\ref{prop:tightnesstoenergybalance}]
		From~\eqref{eq:NSenergybalance}, we know that  solution to the stochastic Navier-Stokes equations satisfy an energy balance
	\begin{equation}\label{eq:nsenergybalance}
		\E\norm{u_{\nu}(t)}_{L^2}^2 = \E\norm{\bar{u}_\nu}_{L^2}^2 -2 \nu \E\int_0^t\norm{\Grad u_\nu}_{L^2}^2 + 2 \E \int_0^t (u_\nu,f_\nu)ds +\bar{\sigma}t,
	\end{equation}
	see e.g.~\cite[Proposition 2.4.8]{Kuksin2012}. As in~\cite{Flandoli1995} one can also show that
	\begin{equation}
		\label{eq:linfl2ubound}
		\E\sup_{t\in [0,T]}\norm{u_\nu}_{L^2}^p\leq C(p).
	\end{equation}
	(To make clear that this bound is independent of $\nu$, we state the proof in the appendix, c.f.~Lemma~\ref{lem:tsupL2boundu}.)

	In order to show tightness of the laws of $u_\nu$, we proceed as in~\cite[Theorem 3.1]{Flandoli1995}, we write $u_\nu$ as
	\begin{equation}\label{eq:approx}
		u_\nu(t) = \underbrace{\bar{u}_{\nu}}_{\text{I}}-\underbrace{\nu \int_0^t A u_\nu(s) ds}_{\text{II}} -\underbrace{\int_0^t  B(u_\nu(s),u_\nu(s)) ds}_{\text{III}} + \underbrace{\int_0^t f_\nu(s) ds}_{\text{IV}}+ \underbrace{\int_0^t \sigma dW(s)}_{\text{V}},
	\end{equation}
	where $A=-P\Delta: D(A)\to \HH$ is the Stokes operator ($P$ the Leray projector) and $B:\VV\times \VV\to (\VV)^*$ is the nonlinear operator defined through the trilinear form:
	\begin{equation*}
		b(u,v,w)=\int_{\dom} (u\cdot \Grad)v \cdot w dx, \quad \langle B(u,v),w\rangle = b(u,v,w).
	\end{equation*}
	$B$ can be extended to a continuous operator $B:\HH\times \HH\to \mathcal{D}(A^{-\kappa})$ for certain $\kappa>1$. The goal is to get a bound on the laws of $u_\nu$ in $W^{\beta,p}(0,T;(\VV\cap H^k(\dom))^*)$ for certain $\beta, p, k$. Thus we estimate
	\begin{equation*}
		\E\norm{\text{I}}_{L^2(\dom)}^2\leq C,
	\end{equation*}
	\begin{equation*}
		\E\norm{\text{II}}_{H^1(0,T;(\VV)^*)}^2\leq C,
	\end{equation*}
	by the energy balance~\eqref{eq:nsenergybalance}. For the fourth term, we have
	\begin{equation*}
		\E\norm{\text{IV}}_{H^1(0,T;L^2(\dom))}^2 \leq C,
	\end{equation*}
	due to the assumption that $f\in L^2([0,T];\HH)$. For the fifth term, Lemma 2.1 in~\cite{Flandoli1995} yields for all $\beta\in (0,1/2)$ and $p\geq 2$
	\begin{equation}\label{eq:fifthterm}
		\E\norm{\text{V}}_{W^{\beta,p}(0,T;\HH)}^p\leq C(\beta).
	\end{equation}
	Finally, for the third term, we use the skew-symmetry of the trilinear form $b$:
	\begin{equation*}
		\left|b(u,u,v)\right|=\left|b(u,v,u)\right|\leq \norm{u}_{L^2}^2 \norm{\Grad v}_{L^\infty}\leq \norm{u}_{L^2}^2 \norm{v}_{H^3\cap \VV},
	\end{equation*}	
	therefore 
	\begin{equation*}
		\norm{B(u_\nu(s),u_\nu(s))}_{L^2(0,T;(H^3\cap \VV)^*)}^2\leq C\norm{u_\nu}^4_{L^\infty(0,T;L^2(\dom))},
	\end{equation*}	
	hence
	\begin{equation*}
		\E\norm{\text{III}}_{H^1(0,T;(H^3\cap \VV)^*)}\leq C\E\norm{u_\nu}_{L^\infty(0,T;L^2(\dom))}^2\leq C,
	\end{equation*}
	using~\eqref{eq:linfl2ubound}. Pick $\beta$ and $p\geq 2$ in~\eqref{eq:fifthterm} such that $\beta p>1$ and denote $Z:= H^1(0,T;(H^3\cap \VV)^*)+ W^{\beta,p}(0,T;(H^3\cap \VV)^*)$. Then we have, collecting all the terms,
	\begin{equation*}
		\E\norm{u_\nu}_{Z}\leq C.
	\end{equation*}	
	This implies together with~\eqref{eq:weakstructurefcn2} and~\eqref{eq:linfl2ubound}
	\begin{equation*}
		\E\sup_{t\in[0,T]}\norm{u_\nu}_{L^2}^p+ \E\sup_{0<r<r_0}\frac{S_2^T(u_\nu,r)^2}{\phi(r)^2}+	\E\norm{u_\nu}_{Z}\leq C.
	\end{equation*}
	By Chebyshev inequality, we have for any $M>0$,
	\begin{equation*}
		\E\sup_{0<r<r_0}\frac{S_2^T(u_\nu,r)^2}{\phi(r)^2}\geq M \P\left[\sup_{0<r<r_0} \frac{S_2^T(u_\nu,r)^2}{\phi(r)^2}> M\right].
		\end{equation*}
Thus, we obtain for any $M>0$,
	\begin{multline*}
		\P\left[   	\sup_{t\in[0,T]}\norm{u_\nu}_{L^2}^p+ \sup_{r>0}\frac{ S_2^T(u_\nu,r)^2}{\phi(r)^2}+	\norm{u_\nu}_{Z} >M\right]\\
		\leq \frac{1}{M}\left(\E\sup_{t\in[0,T]}\norm{u_\nu}_{L^2}^p+ \E\sup_{0<r<r_0}\frac{ S_2^T(u_\nu,r)^2}{\phi(r)^2}+	\E\norm{u_\nu}_{Z}\right)\leq \frac{C(\beta,p)}{M},
	\end{multline*}
	thus for a given $\epsilon>0$, choosing $M_\epsilon$ such that $\frac{C(\beta,p)}{M_\epsilon}<\epsilon$, we get
	\begin{equation}\label{eq:tightness}
		\P\left[   	\sup_{t\in[0,T]}\norm{u_\nu}_{L^2}^p+ \sup_{r>0}\frac{S_2^T(u_\nu,r)^2}{\phi(r)^2}+	\norm{u_\nu}_{Z}\leq M_\epsilon\right]\geq 1-\epsilon.
	\end{equation}
	The set 
	\begin{equation*}
		K_\epsilon: = \left\{  u_\nu\in L^2(0,T;\HH)\,: \, \sup_{t\in[0,T]}\norm{u_\nu}_{L^2}^p+ \sup_{r>0}\frac{S_2^T(u_\nu,r)^2}{\phi(r)^2}+	\norm{u_\nu}_{Z}\leq M_\epsilon\right\},
	\end{equation*}
	is a compact set in $L^2(0,T;\HH)$ by Proposition~\ref{prop:moctimedep}, thus by Prokohorov's theorem, the sequence of measures $\mu_\nu = \P\circ u_\nu^{-1}$ is tight in $L^2(0,T;\HH)$.
Furthermore, since we obtain from Proposition~\ref{prop:moctimedep} that $Z\subset C^\alpha(0,T;(H^3\cap \VV)^*)$ for $\alpha = \min\{1/2,\beta-1/p\}$ and $C^\alpha([0,T];(H^3\cap\VV)^*)\cap L^\infty(0,T;\HH)\subset C([0,T];(\HH)_w)$ by~\cite[Lemma II.5.9]{Boyer2013}, the $\mu_\nu$ are bounded on $C([0,T];\HH_w)$ and thus tight in $L^2(0,T;\HH)\cap C([0,T];\HH_w)$ by~\cite[Lemma 5.1.12 (c)]{Ambrosio2008}, c.f. Remark~\ref{rem:weaktight}.
We pick a weakly convergent subsequence denoted by $\nu$ for ease of notation.
	Next, by the Skorokhod embedding theorem, there exists a stochastic basis $(\widetilde{\Omega},\widetilde{\mathcal{F}},\{\widetilde{\mathcal{G}}_t\}_{t\in [0,T]},\widetilde{\P})$ and on this basis $L^2(0,T;\HH)\cap C([0,T];(\HH)_w$-valued random variables $\widetilde{u}$, $\widetilde{u}_{\nu}$, $\nu>0$ such that $\widetilde{u}_\nu$ has the same law as $u_\nu$  on $L^2(0,T;\HH)\cap C([0,T];(\HH)_w$ and $\widetilde{u}_\nu\to\widetilde{u}$ in $L^2(0,T;\HH)\cap C([0,T];\HH_w)$ $\widetilde{\P}$-almost surely. Hence, we can pass to the limit in the weak formulation of all terms in~\eqref{eq:approx} to find that the limit satisfies for all $v\in \VV$ $\widetilde{\P}$-a.s.
	\begin{equation*}
		(\widetilde{u}(t),v)=(\widetilde{{u}}(0),v)+\int_0^t b(\widetilde{u},v,\widetilde{u}) ds+\int_0^t(f(s),v)ds+\int_0^t(\widetilde{\sigma},v)d\widetilde{W}(s).
	\end{equation*}
	Since $\widetilde{u}\in C([0,T];\HH_w)$, it is continuous at zero and we have almost surely $\widetilde{u}(0) = \widetilde{\bar{u}}=\bar{u}$ since the limit of $\{\bar{u}_\nu\}_{\nu>0}$ is unique.
	Moreover, since $\widetilde{W}$ is a Brownian motion on the probability space $(\widetilde{\Omega},\widetilde{\mathcal{F}},\{\widetilde{\mathcal{G}}_t\}_{t\in [0,T]},\widetilde{\P})$, $\widetilde{u}$ is a martingale solution. 
	
	It remains to show the energy balance. To do so, we introduce the following stopping time for $M>0$,
	\begin{equation}
		\label{eq:everybodystop}
		\tau^{\nu,M} = \inf\{ t>0\, : \, \int_0^t \norm{\widetilde{u}_\nu(s)}_{L^2}^2 ds \geq M\}\wedge T.
	\end{equation}
	This is a stopping time since it is adapted to the filtration $\{\widetilde{\mathcal{G}}_t\}_{t\in [0,T]}$ and the stochastic process $X^\nu(t):=\int_0^t \norm{\widetilde{u}_\nu(s)}_{L^2}^2 ds$ has a continuous modification by Kolmogorov's extension theorem. Indeed, we have for $s,t\geq 0$, and $\alpha>0$
	\begin{equation*}
		\widetilde{\E}|X^\nu(t)-X^\nu(s)|^\alpha = \widetilde{\E}\left|\int_s^t \norm{\widetilde{u}_\nu(r)}_{L^2}^2dr \right|^\alpha \leq |t-s|^\alpha\widetilde{\E}\sup_{r\in [0,T]}\norm{\widetilde{u}_\nu(r)}_{L^2}^{2\alpha} \leq C(2\alpha)|t-s|^\alpha,
	\end{equation*}
	for any $\frac{p}{2}>\alpha>0$ by~\eqref{eq:linfl2ubound}. Thus picking $\alpha>1$ (which is possible since w assume that $p>2$), we obtain that $X^\nu$ has a $(\alpha-1)$-H\"older continuous modification by the Kolmogorov extension theorem, for any $\nu\geq 0$. Since $M>0$ and $\int_0^t\norm{\widetilde{u}_\nu}_{L^2}^2 ds=0$ for $t=0$ a.s. and $\tau^{\nu,M}$ is a.s. continuous, we have that $\tau^{\nu,M}>0$ almost surely. We also note that this stopping time is bounded, $\tau^{\nu,M}\leq T$, by definition, and since $\widetilde{u}_\nu\to \widetilde{u}$ in $L^2(0,T;\HH)$, we have
	\begin{equation}\label{eq:heywhatsthatsound}
		\tau^M:= \inf\{t>0\,: \, \int_0^t \norm{\widetilde{u}(s)}_{L^2}^2 ds \geq M\}\wedge T \leq \liminf_{\nu\to 0} \tau^{\nu,M},\quad \text{a.s.}
	\end{equation}
	as for example in~\cite[Section 3]{Gyongy1996}. Now since we have from the previously established a.s. $L^2(0,T;\HH)$-convergence of $\widetilde{u}_\nu$, for any continuous $\theta\in C([0,T])$, and almost every $\omega\in\Omega$,
	\begin{equation*}
		\int_0^T\theta(t) \norm{\widetilde{u}_\nu(t)-\widetilde{u}(t)}_{L^2(\dom)}^2 dt\stackrel{\nu\to 0}{\longrightarrow} 0 ,
	\end{equation*}
	and therefore also for almost every $\omega\in \Omega$,
	\begin{equation*}
	\int_0^{T\wedge \tau^M}\theta(t) \norm{\widetilde{u}_\nu(t)-\widetilde{u}(t)}_{L^2(\dom)}^2 dt =\int_0^{T}\theta(t)\mathbf{1}_{[0,T\wedge \tau^M]}(t) \norm{\widetilde{u}_\nu(t)-\widetilde{u}(t)}_{L^2(\dom)}^2 dt \stackrel{\nu\to 0}{\longrightarrow} 0.
\end{equation*}
Since, by the definition of the stopping time and~\eqref{eq:heywhatsthatsound}, we have
 \begin{equation*}
 		\int_0^{T\wedge \tau^M}\theta(t) \norm{\widetilde{u}_\nu(t)-\widetilde{u}(t)}_{L^2(\dom)}^2 dt\leq 2 M,
 \end{equation*}
 for almost every $\omega\in \Omega$, we can use the dominated convergence theorem to conclude that
 \begin{equation}\label{eq:everybodylook}
 	\widetilde{\E}	\int_0^{T\wedge \tau^M}\theta(t) \norm{\widetilde{u}_\nu(t)-\widetilde{u}(t)}_{L^2(\dom)}^2 dt=\widetilde{\E}	\int_0^{T\wedge \tau^M}\theta(t) \norm{\widetilde{u}_\nu(t\wedge \tau^M)-\widetilde{u}(t\wedge \tau^M)}_{L^2(\dom)}^2 dt\stackrel{\nu\to 0}{\longrightarrow} 0,
 \end{equation}
 and therefore also
\begin{equation}\label{eq:lhs}
	\widetilde{\E}	\int_0^{T\wedge \tau^M}\theta(t) \norm{\widetilde{u}_\nu(t)}_{L^2(\dom)}^2 dt\stackrel{\nu\to 0}{\longrightarrow}\widetilde{\E}	\int_0^{T\wedge \tau^M}\theta(t) \norm{\widetilde{u}(t)}_{L^2(\dom)}^2 dt.
\end{equation}
From~\eqref{eq:everybodylook}, we also obtain that for almost every $t\in [0,T]$,
\begin{equation}
	\label{eq:whatsgoingdown}
	\widetilde{\E} \norm{\widetilde{u}_\nu(t\wedge \tau^M)-\widetilde{u}(t\wedge \tau^M)}_{L^2(\dom)}^2 \stackrel{\nu\to 0}{\longrightarrow} 0.
\end{equation}
Next, we observe that the limit $\widetilde{u}(t)$ is continuous near zero in the sense that $\widetilde{\E}\norm{\widetilde{u}(t)-\widetilde{\bar{u}}}_{L^2}^2\to 0$ as $t\to 0$. To see this, we note first since $\widetilde{u}\in C([0,T];\HH_w)$, we have that $t\mapsto (\widetilde{u}(t),v)$ is continuous for $v\in L^2(\Omega;\HH)$ that is measurable with respect to $\widetilde{\mathcal{G}}_0$ for almost every $\omega\in \Omega$. Then since $|(\widetilde{u}(t),v)|\leq \sup_{t\in [0,T]}\norm{\widetilde{u}(t)}_{L^2}\norm{v}_{L^2}$, we can use~\eqref{eq:linfl2ubound} combined with the dominated convergence theorem to conclude that
\begin{equation}\label{eq:weakcont}
	t\mapsto \widetilde{\E}(\widetilde{u}(t),v),\quad \text{is continuous. }
\end{equation} 
Then, by Fatou's lemma, 
\begin{equation}\label{eq:liminf}
{\E}\norm{{\bar{u}}}_{L^2}^2 =\widetilde{\E}\norm{\widetilde{\bar{u}}}_{L^2}^2\leq \liminf_{t\to 0}\widetilde{\E}\norm{\widetilde{u}(t\wedge \tau^M)}_{L^2}^2.
\end{equation}
On the other hand, we obtain from the energy balance~\eqref{eq:nsenergybalance} (c.f. Equation (2.121) in~\cite{Kuksin2012})
\begin{equation}\label{eq:energyineqnu}
	\widetilde{\E}\norm{\widetilde{u}_{\nu}(t\wedge \tau^M)}_{L^2}^2 \leq  \E\norm{\bar{u}_\nu}_{L^2}^2  + 2 \widetilde{\E} \int_0^{t\wedge\tau^M} (\widetilde{u}_\nu,f_\nu)ds +\bar{\sigma}\E t\wedge \tau^M.
\end{equation}
We can use the strong convergence of the initial data $\bar{u}_\nu \to \bar{u}$ in $L^2(\Omega;\HH)$, the weak convergence of $f_\nu$ in $L^2(0,T;\HH)$ combined with the strong convergence of $\widetilde{u}_\nu$ in $L^2(0,T;\HH)$ to pass to the limit in the terms on the right hand side of~\eqref{eq:energyineqnu}. On the left hand side, we use Fatou's lemma to obtain:
\begin{equation}
	\label{eq:eulerenergyinequality}
	\widetilde{\E}\norm{\widetilde{u}(t\wedge \tau^M)}_{L^2}^2\leq {\E}\norm{{\bar{u}}}_{L^2}^2+2\int_0^{t\wedge\tau^M}\widetilde{\E}(f,\widetilde{u}) ds +\bar{\sigma} \E t\wedge\tau^M. 
\end{equation}
Now using again~\eqref{eq:linfl2ubound} and that $f\in L^2(0,T;\HH)$, we can take the limsup over $t\to 0$ in this identity to obtain
\begin{equation*}
	\limsup_{t\to 0}	\widetilde{\E}\norm{\widetilde{u}(t\wedge \tau^M)}_{L^2}^2\leq {\E}\norm{{\bar{u}}}_{L^2}^2.
\end{equation*}
Combining this with~\eqref{eq:weakcont} and~\eqref{eq:liminf}, we obtain
\begin{equation}\label{eq:strongcontatzeroa}
	\lim_{t\to 0}\widetilde{\E}\norm{\widetilde{u}(t\wedge \tau^M)-\widetilde{\bar{u}}}_{L^2}^2 = 0, 
\end{equation} 
as claimed.
Now, let $\min(T,1)>\epsilon>0$ be an arbitrary small number. Then~\eqref{eq:strongcontatzeroa} implies that we can find $\delta_1>0$ such that $\delta_1^{1/2}<\epsilon$ and for any $t\in [0,\delta_1]$, we have
\begin{equation}
	\label{eq:initialterm}
	\widetilde{\E}\left|\norm{\widetilde{u}(t\wedge\tau^M)}_{L^2}^2 - \norm{\widetilde{\bar{u}}}_{L^2}^2\right|<\epsilon.
\end{equation}
Next, let $0<\delta_2\leq \delta_1$ be such that for any $0<\delta\leq \delta_2$,
\begin{equation}\label{eq:fdelta}
	\int_0^\delta \norm{f(s)}_{L^2}^2 ds <\epsilon.
\end{equation}
This is possible since $f\in L^2(0,T;\HH)$. Now we pick $0<\delta\leq \delta_2$ for which~\eqref{eq:whatsgoingdown} holds.
We recall the energy balance Equation (2.121) in~\cite{Kuksin2012}, i.e., we have for a.e. $\omega\in\Omega$,
\begin{multline*}
	\norm{\widetilde{u}_\nu(t\wedge \tau^M)}_{L^2(\dom)}^2 = \norm{\widetilde{\bar{u}}_\nu}_{L^2(\dom)}^2  +2 \int_{0}^{t \wedge \tau^M}(f_\nu,\widetilde{u}_\nu) ds-2\nu\int_{0}^{t\wedge \tau^M} \norm{\Grad \widetilde{u}_\nu}_{L^2(\dom)}^2 ds\\
	+\bar{\sigma} t\wedge \tau^M +2\int_{0}^{t\wedge \tau^M} \widetilde{u}_\nu \sigma d\widetilde{W}(s).
\end{multline*}
 Subtracting this identity for $t=\delta$ from the energy balance for $t$, we have for any $\nu>0$, a.s.
 \begin{multline}\label{eq:aeenergybalance}
 	\norm{\widetilde{u}_\nu(t\wedge \tau^M)}_{L^2(\dom)}^2 = \norm{\widetilde{u}_\nu(\delta\wedge \tau^M)}_{L^2(\dom)}^2  +2 \int_{\delta\wedge \tau^M}^{t \wedge \tau^M}(f_\nu,\widetilde{u}_\nu) ds-2\nu\int_{\delta\wedge \tau^M}^{t\wedge \tau^M} \norm{\Grad \widetilde{u}_\nu}_{L^2(\dom)}^2 ds\\
 	 +\bar{\sigma} \left[t\wedge \tau^M -\delta \wedge \tau^M\right]+2\int_{\delta\wedge \tau^M}^{t\wedge \tau^M} \widetilde{u}_\nu \sigma d\widetilde{W}(s).
 \end{multline}

 We have a different stopping time than in~\cite{Kuksin2012}, however, in order to apply \cite[Theorem 7.7.5]{Kuksin2012}, we could take the minimum of our stopping time and the one in~\cite{Kuksin2012}, let's call it $\hat{\tau}^N$, which yields a new stopping time $\tau^M\wedge \hat{\tau}^N$, for which we can apply~\cite[Theorem 7.7.5]{Kuksin2012} and then pass to the limit $N\to \infty$ in $\hat{\tau}^N$ to obtain~\eqref{eq:aeenergybalance}.
  Multiplying~\eqref{eq:aeenergybalance} by $\theta$ and integrating this identity in time over $[\delta\wedge\tau^M,T\wedge \tau^M]$ and taking expectations, we obtain
 \begin{align}
 	\label{eq:doweneedalabel}
 	\begin{split}
 	&\widetilde{\E}\int_0^T\theta(t)\mathbf{1}_{[0,T\wedge\tau^M]}\norm{\widetilde{u}_\nu(t\wedge \tau^M)}_{L^2(\dom)}^2 dt\\
 	 &\quad  = 	\widetilde{\E}\int_{\delta\wedge\tau^M}^{T\wedge\tau^M}\theta(t)\norm{\widetilde{u}_\nu(t\wedge \tau^M)}_{L^2(\dom)}^2 dt +\widetilde{\E}\int_0^{\delta\wedge\tau^M}\theta(t)\norm{\widetilde{u}_\nu(t\wedge \tau^M)}_{L^2(\dom)}^2 dt\\
 	& \quad =\underbrace{\widetilde{\E}\int_0^T\theta(t)\mathbf{1}_{[0,\delta\wedge \tau^M]}\norm{\widetilde{u}_\nu(t\wedge \tau^M)}_{L^2(\dom)}^2 dt}_{\text{I}} + \underbrace{\widetilde{\E}\int_0^T\theta(t)\mathbf{1}_{[\delta\wedge \tau^M,T\wedge\tau^M]}\norm{\widetilde{u}_\nu(\delta\wedge \tau^M)}_{L^2(\dom)}^2 dt }_{\text{II}}
 	\\
 	&\qquad +  2\underbrace{\widetilde{\E}\int_0^T\theta(t)\mathbf{1}_{[\delta\wedge \tau^M,T\wedge\tau^M]}\int_{\delta\wedge \tau^M}^{t \wedge \tau^M}(f_\nu,\widetilde{u}_\nu) ds dt }_{\text{III}} + \underbrace{\widetilde{\E}\int_0^T\theta(t)\mathbf{1}_{[\delta\wedge \tau^M,T\wedge\tau^M]} \bar{\sigma} \left[t\wedge \tau^M -\delta \wedge \tau^M\right] dt }_{\text{IV}} 
 	 \\
 	& \qquad  +   2\underbrace{\widetilde{\E}\int_0^T\theta(t)\mathbf{1}_{[\delta\wedge \tau^M,T\wedge\tau^M]} \int_{\delta\wedge \tau^M}^{t\wedge \tau^M} \widetilde{u}_\nu \sigma d\widetilde{W}(s) 		 dt }_{\text{V}}+2\underbrace{\nu\widetilde{\E}\int_0^T\theta(t)\mathbf{1}_{[\delta\wedge \tau^M,T\wedge\tau^M]}\int_{\delta\wedge \tau^M}^{t\wedge \tau^M} \norm{\Grad \widetilde{u}_\nu}_{L^2(\dom)}^2 ds dt }_{\text{VI}}.
 	\end{split}
 	\end{align}
 Next, we compute the limits of all these terms as $\nu\to 0$. We already know that the left hand side converges from~\eqref{eq:lhs}. For term I, we have
 \begin{align*}
 	|\text{I}|\leq \norm{\theta}_{L^\infty}\delta \sup_{t\in [0,T]}\widetilde{\E}\norm{\widetilde{u}_\nu(t)}_{L^2}^2\leq C \delta\leq C \epsilon.
 \end{align*}
	For term II, since we are picking $\delta$ for which~\eqref{eq:whatsgoingdown} holds, and compute
	\begin{align*}
		\text{II} &= \widetilde{\E}\int_0^T\theta(t)\mathbf{1}_{[\delta\wedge \tau^M,T\wedge\tau^M]}\norm{\widetilde{u}(\delta\wedge \tau^M)}_{L^2(\dom)}^2 dt \\
		&\quad-\underbrace{\widetilde{\E}\int_0^T\theta(t)\mathbf{1}_{[\delta\wedge \tau^M,T\wedge\tau^M]}\left(\norm{\widetilde{u}(\delta\wedge \tau^M)}_{L^2(\dom)}^2 - \norm{\widetilde{u}_\nu(\delta\wedge \tau^M)}_{L^2(\dom)}^2\right)dt }_{\text{IIa}}.
	\end{align*}
	We bound the second term as follows:
	\begin{align*}
		|\text{IIa}|&\leq \widetilde{\E}\int_0^T|\theta(t)|\mathbf{1}_{[\delta\wedge \tau^M,T\wedge\tau^M]}\left|\norm{\widetilde{u}(\delta\wedge \tau^M)}_{L^2(\dom)}^2 - \norm{\widetilde{u}_\nu(\delta\wedge \tau^M)}_{L^2(\dom)}^2\right|dt\\
		& \leq \widetilde{\E}\left|\norm{\widetilde{u}(\delta\wedge \tau^M)}_{L^2(\dom)}^2 - \norm{\widetilde{u}_\nu(\delta\wedge \tau^M)}_{L^2(\dom)}^2\right| T \norm{\theta}_{L^\infty([0,T])},
	\end{align*}
	which goes to zero by~\eqref{eq:whatsgoingdown}.
	Thus, we have
	\begin{equation*}
		\text{II}\stackrel{\nu\to 0}{\longrightarrow}  \widetilde{\E}\int_0^T\theta(t)\mathbf{1}_{[\delta\wedge \tau^M,T\wedge\tau^M]}\norm{\widetilde{u}(\delta\wedge \tau^M)}_{L^2(\dom)}^2 dt
	\end{equation*}
	Now, from~\eqref{eq:initialterm}, we obtain
	\begin{equation*}
		\left| \widetilde{\E}\int_0^T\theta(t)\mathbf{1}_{[\delta\wedge \tau^M,T\wedge\tau^M]}\norm{\widetilde{u}(\delta\wedge \tau^M)}_{L^2(\dom)}^2 dt -  \widetilde{\E}\int_0^T\theta(t)\mathbf{1}_{[\delta\wedge \tau^M,T\wedge\tau^M]}\norm{\widetilde{\bar{u}}}_{L^2(\dom)}^2 dt\right| \leq \epsilon \norm{\theta}_{L^\infty} T,
	\end{equation*}
	thus
	\begin{equation*}
		\lim_{\nu\to 0}\left|\text{II}- \widetilde{\E}\int_0^{T\wedge \tau^M}\theta(t)\norm{\widetilde{\bar{u}}}_{L^2(\dom)}^2 dt\right| \leq C\epsilon.
	\end{equation*}
	Next, we rewrite term III as follows:
	\begin{align*}
		\text{III} &= \widetilde{\E}\int_0^T\theta(t)\mathbf{1}_{[\delta\wedge \tau^M,T\wedge\tau^M]}\int_{\delta\wedge \tau^M}^{t \wedge \tau^M}(f,\widetilde{u}) ds dt\\
		& \quad   +\underbrace{\widetilde{\E}\int_0^T\theta(t)\mathbf{1}_{[\delta\wedge \tau^M,T\wedge\tau^M]}\int_{\delta\wedge \tau^M}^{t \wedge \tau^M}(f_\nu-f,\widetilde{u}) ds dt }_{\text{IIIa}} + \underbrace{\widetilde{\E}\int_0^T\theta(t)\mathbf{1}_{[\delta\wedge \tau^M,T\wedge\tau^M]}\int_{\delta\wedge \tau^M}^{t \wedge \tau^M}(f_\nu,\widetilde{u}_\nu-\widetilde{u}) ds dt }_{\text{IIIb}}.
	\end{align*}
	For the second term on the right hand side we use the weak convergence of $f_\nu$ in $L^2(0,T;\HH)$ to obtain
	\begin{equation*}
		\text{IIIa}\stackrel{\nu\to 0}{\longrightarrow} 0.
	\end{equation*}
	For the third term on the right hand side, we use the uniform boundedness of the $f_\nu$ in $L^2(0,T;\HH)$ combined with~\eqref{eq:everybodylook} to obtain
	\begin{equation*}
		|\text{IIIb}|\leq \norm{\theta}_{L^\infty}T\norm{f_\nu}_{L^2(0,T;\HH)}\left(\widetilde{\E}\int_0^{T\wedge \tau^M}\norm{\widetilde{u}_\nu(t) -\widetilde{u}(t)}_{L^2}^2 dt\right)^{1/2}\stackrel{\nu\to 0}{\longrightarrow} 0.
	\end{equation*}

	Thus we have
	\begin{equation*}
		\text{III}\stackrel{\nu\to 0}{\longrightarrow}  \widetilde{\E}\int_0^T\theta(t)\mathbf{1}_{[\delta\wedge \tau^M,T\wedge\tau^M]}\int_{\delta\wedge \tau^M}^{t \wedge \tau^M}(f,\widetilde{u}) ds dt.
	\end{equation*}
	Now since 
	\begin{align*}
		&\left| \widetilde{\E}\int_{\delta\wedge \tau^M}^{T\wedge \tau^M}\!\!\!\theta(t)\int_{\delta\wedge \tau^M}^{t \wedge \tau^M}(f,u) ds dt - \widetilde{\E}\int_0^{T\wedge \tau^M}\!\!\!\theta(t)\int_{0}^{t \wedge \tau^M}(f,u) ds dt\right| \\
		&\quad \leq \left| \widetilde{\E}\int_0^{T\wedge \tau^M}\!\!\!\theta(t)\int_0^{\delta\wedge \tau^M}(f,u) ds dt \right| + \left| \widetilde{\E}\int_0^{\delta\wedge \tau^M}\!\!\!\theta(t)\int_0^{t\wedge \tau^M}(f,u) ds dt \right|\\
		&\quad \leq \norm{\theta}_{L^\infty} T \widetilde{\E}\sup_{t\in [0,T]}\norm{\widetilde{u}(t)}_{L^2}^2\int_0^\delta \norm{f(s)}_{L^2}^2 ds +  \norm{\theta}_{L^\infty} \delta \widetilde{\E}\sup_{t\in [0,T]}\norm{\widetilde{u}(t)}_{L^2}^2\int_0^\delta \norm{f(s)}_{L^2}^2 ds \\
		& \leq 2\epsilon \norm{\theta}_{L^\infty} T \widetilde{\E}\sup_{t\in [0,T]}\norm{\widetilde{u}(t)}_{L^2}^2,
			\end{align*}
			by~\eqref{eq:fdelta}, we have
			\begin{equation*}
				\lim_{\nu\to 0}\left|\text{III} - \widetilde{\E}\int_0^{T\wedge \tau^M}\!\!\!\theta(t)\int_{0}^{t \wedge \tau^M}(f,u) ds dt\right| \leq C \epsilon.
			\end{equation*}
	Term IV does not depend on $\nu$, we can estimate it in terms of $\delta$ and $\epsilon$ as follows:
	\begin{equation*}
		\left|\text{IV} - \widetilde{\E}\int_0^{T\wedge \tau^M}\theta(t) \bar{\sigma} t\wedge \tau^M  dt\right| \leq  C \norm{\theta}_{L^\infty}\bar{\sigma} T \delta\leq C \epsilon
	\end{equation*}
	We proceed to term V: We write it as
	\begin{equation*}
		\begin{split}
			\text{V}&= \widetilde{\E}\int_0^T\theta(t)\mathbf{1}_{[\delta\wedge \tau^M,T\wedge\tau^M]} \int_{\delta\wedge \tau^M}^{t\wedge \tau^M} \widetilde{u} \sigma d\widetilde{W}(s) 		 dt\\
			&+\underbrace{\int_0^T\theta(t)\widetilde{\E}\int_0^T\mathbf{1}_{[\delta\wedge \tau^M,T\wedge\tau^M]}(t) \mathbf{1}_{[\delta\wedge \tau^M,t\wedge\tau^M]}(s)
			(\widetilde{u}_\nu-\widetilde{u}) \cdot \sigma d\widetilde{W}(s) 	dt}_{\text{Va}}.
		\end{split}
	\end{equation*}
	We use the It\^o isometry for $\Phi^\nu(s):= \mathbf{1}_{[\delta\wedge \tau^M,t\wedge\tau^M]}(s)
	(\widetilde{u}_\nu-\widetilde{u})(s)$, c.f.~\cite[Equation (4.30)]{DaPrato2014} to show that term $\text{Va}$ goes to zero. We estimate, first using the Cauchy-Schwarz inequality and then the It\^o isometry:
	
	\begin{align*}
		|\text{Va}|&\leq  \int_0^T|\theta(t)|\left(\widetilde{\E}\mathbf{1}_{[\delta\wedge\tau^M,T\wedge \tau^M]}(t)\right)^{1/2}\left(\widetilde{\E}\left(\int_0^T\mathbf{1}_{[\delta\wedge\tau^M,t\wedge\tau^M]}(s)(\widetilde{u}_\nu-\widetilde{u})(s)\sigma d\widetilde{W}(s)\right)^2 \right)^{1/2} dt\\
		& \leq \int_0^T|\theta(t)|\left(\widetilde{\E}\left(\int_0^T\mathbf{1}_{[\delta\wedge\tau^M,t\wedge\tau^M]}(s)(\widetilde{u}_\nu-\widetilde{u})(s)\sigma d\widetilde{W}(s)\right)^2 \right)^{1/2} dt\\
		& = \int_0^T|\theta(t)|\left(\widetilde{\E}\left(\int_0^T\Phi^\nu(s)\sigma d\widetilde{W}(s)\right)^2 \right)^{1/2} dt\\
		&  = \int_0^T |\theta(t)| \left(\widetilde{\E}\int_0^T \sum_{k=1}^\infty b_k^2 \left| (\Phi^\nu(s),e_k)\right|^2 ds\right)^{1/2}dt\\
		& = \int_0^T|\theta(t)| \left(  \widetilde{\E}\int_0^T \mathbf{1}_{[\delta\wedge \tau^M,t\wedge\tau^M]}(s)\sum_{k=1}^\infty b_k^2 \left|( 
		(\widetilde{u}_\nu-\widetilde{u})(s),e_k)\right|^2 ds \right)^{1/2}dt\\
		& = \int_0^T |\theta(t)| \left( \widetilde{\E}\int_{\delta\wedge \tau^M}^{t\wedge\tau^M} \sum_{k=1}^\infty b_k^2 \left|( 		(\widetilde{u}_\nu-\widetilde{u})(s),e_k)\right|^2 ds\right)^{1/2} dt\\
		& \leq \int_0^T |\theta(t)|    \bar{\sigma}^{1/2} \left(\widetilde{\E}\int_0^{t\wedge \tau^M}\norm{\widetilde{u}(s)-\widetilde{u}_\nu(s)}_{L^2(\dom)}^2 ds \right)^{1/2}dt \stackrel{\nu\to 0}{\longrightarrow} 0.	
	\end{align*}
	by~\eqref{eq:everybodylook}. 
Hence, $\text{Va}\to 0$ as $\nu\to 0$ and
	\begin{equation*}
			\text{V}\stackrel{\nu\to 0}{\longrightarrow} \widetilde{\E}\int_0^T\theta(t)\mathbf{1}_{[\delta\wedge \tau^M,T\wedge\tau^M]} \int_{\delta\wedge \tau^M}^{t\wedge \tau^M} \widetilde{u} \sigma d\widetilde{W}(s) 		 dt.
	\end{equation*}
	We have
	\begin{align*}
		&\left|\widetilde{\E}\int_0^T\theta(t)\mathbf{1}_{[\delta\wedge \tau^M,T\wedge\tau^M]} \int_{\delta\wedge \tau^M}^{t\wedge \tau^M} \widetilde{u} \sigma d\widetilde{W}(s) 		 dt - \widetilde{\E}\int_0^T\theta(t)\mathbf{1}_{[0,T\wedge\tau^M]} \int_{0}^{t\wedge \tau^M} \widetilde{u} \sigma d\widetilde{W}(s) 		 dt\right|\\
		&\leq \left|\widetilde{\E}\int_{\delta\wedge \tau^M}^{T\wedge \tau^M}\theta(t)\int_{0}^{\delta\wedge \tau^M} \widetilde{u} \sigma d\widetilde{W}(s) 		 dt\right| + \left|\widetilde{\E}\int_0^{\delta\wedge \tau^M}\theta(t) \int_{0}^{t\wedge \tau^M} \widetilde{u} \sigma d\widetilde{W}(s) 		 dt\right|
	\end{align*}
	We use the It\^o isometry once more to show that the two terms on the right hand side are of order $\epsilon$:
	\begin{align*}
		 \left|\widetilde{\E}\int_{\delta\wedge \tau^M}^{T\wedge \tau^M}\theta(t)\int_{0}^{\delta\wedge \tau^M} \widetilde{u} \sigma d\widetilde{W}(s) 		 dt\right| & \leq \int_0^T |\theta(t)|\sqrt{\bar{\sigma}}\left(\widetilde{\E}\int_0^{\delta\wedge \tau^M} \norm{\widetilde{u}}_{L^2}^2 ds\right)^{1/2}dt\\
		 & \leq T \norm{\theta}_{L^\infty}\sqrt{\bar{\sigma}}\delta^{1/2}\left(\widetilde{\E}\sup_{s\in[0,T]} \norm{\widetilde{u}(s)}_{L^2}^2 \right)^{1/2}\\
		 & \leq C \epsilon,
	\end{align*}
	by the assumptions on $\delta$ and~\eqref{eq:linfl2ubound}. Similarly, the second term above can be show to be of order $\mathcal{O}(\epsilon)$. Thus we have that
	\begin{equation*}
		\lim_{\nu\to 0} \left|\text{V}-\widetilde{\E}\int_0^{T\wedge \tau^M}\theta(t)\int_{0}^{t\wedge \tau^M} \widetilde{u} \sigma d\widetilde{W}(s) 		 dt \right|\leq C \epsilon.
	\end{equation*}
	
	Next, we show that term $\text{VI}\to 0$. First, we note that we can upper bound this term as follows:
	\begin{align*}
		|\text{VI}|& = \left|\nu\widetilde{\E}\int_0^T\theta(t)\mathbf{1}_{[\delta\wedge \tau^M,T\wedge\tau^M]}\int_{\delta\wedge \tau^M}^{t\wedge \tau^M} \norm{\Grad \widetilde{u}_\nu}_{L^2(\dom)}^2 ds dt\right| \\
		& \leq \norm{\theta}_{L^\infty}\nu\widetilde{\E}\int_{\delta\wedge \tau^M}^{T\wedge \tau^M}\int_{\delta\wedge \tau^M}^{t\wedge \tau^M} \norm{\Grad \widetilde{u}_\nu}_{L^2(\dom)}^2 ds dt.		
	\end{align*}
	Hence it is sufficient to show that
	\begin{equation*}
		\text{VIa}: = \nu\widetilde{\E}\int_{\delta\wedge \tau^M}^{T\wedge \tau^M}\int_{\delta\wedge \tau^M}^{t\wedge \tau^M} \norm{\Grad \widetilde{u}_\nu}_{L^2(\dom)}^2 ds dt \stackrel{\nu\to 0}{\longrightarrow} 0 .		
	\end{equation*}
Without loss of generality, let us assume that the $\widetilde{u}_\nu$ are sufficiently smooth in space for the following calculations. If it is not the case, we can consider a Galerkin approximation as in Lemma~\ref{lem:vorticitybound}, c.f.,~\eqref{eq:galerkinprojection} and~\eqref{eq:Galerkinforeta}, and pass to the limit after establishing a uniform bound. We consider the energy balance for the vorticity $\w_\nu:=\widetilde{u}_\nu$, c.f.,~\eqref{eq:energyvorticity},
 using~\cite[Theorem 7.7.5]{Kuksin2012} for $F(\w_\nu)= \norm{\w_\nu}_{L^2}^2$ for any $\delta>0$,
	\begin{multline*}
		\norm{\w_\nu(t\wedge \tau^M)}^2_{L^2} = \norm{{\w}_\nu(\delta\wedge\tau^M)}^2_{L^2} - 2\nu\int_{\delta\wedge \tau^M}^{t\wedge\tau^M} \norm{\nabla \w_\nu}^2_{L^2}  ds\\
		- 2\int_{\delta\wedge \tau^M}^{t\wedge\tau^M} \ev{f_\nu, \curl \w_\nu} \, ds
		+ 2\int_{\delta\wedge \tau^M}^{t\wedge\tau^M} \ev{\curl(\sigma), \w_\nu} d\widetilde{W}(s) + \bar{\rho}(t\wedge\tau^M-\delta\wedge\tau^M).
	\end{multline*}
	With Young's inequality, we estimate
	$$- 2 \int_{\delta\wedge \tau^M}^{t\wedge\tau^M}  \ev{f_\nu, \curl \w_\nu} \, ds \le \nu \int_{\delta\wedge \tau^M}^{t\wedge\tau^M}  \norm{\nabla \w_\nu}^2_{L^2} \, ds + \nu^{-1} \int_{\delta\wedge \tau^M}^{t\wedge\tau^M}  \norm{f_\nu}^2_{L^2} \, ds.$$
	Integrating this identity on $[\delta\wedge \tau^M,T\wedge \tau^M]$ and taking expectations, we obtain
	\begin{align*}
		\widetilde{\E}\int_{\delta\wedge \tau^M}^{T\wedge \tau^M}\norm{\w_\nu(t\wedge \tau^M)}^2_{L^2}dt &\le \widetilde{\E}(T\wedge\tau^M-\delta\wedge\tau^M)\norm{{\w}_\nu(\delta\wedge\tau^M)}^2_{L^2} - \nu\widetilde{\E}\int_{\delta\wedge \tau^M}^{T\wedge \tau^M}\!\!\int_{\delta\wedge \tau^M}^{t\wedge\tau^M} \norm{\nabla \w_\nu}^2_{L^2} \, dsdt  \\
		&\quad + \nu^{-1} \widetilde{\E}\int_{\delta\wedge \tau^M}^{T\wedge \tau^M}\!\!\int_{\delta\wedge \tau^M}^{t\wedge\tau^M} \norm{f_\nu}^2_{L^2} \, dsdt  + \bar{\rho}\widetilde{\E} (T\wedge\tau^M-\delta\wedge\tau^M)^2\\
		& \leq \widetilde{\E}\left(T\wedge\tau^M\norm{{\w}_\nu(\delta\wedge\tau^M)}^2_{L^2}\right) - \nu\widetilde{\E}\int_{\delta\wedge \tau^M}^{T\wedge \tau^M}\!\!\int_{\delta\wedge \tau^M}^{t\wedge\tau^M} \norm{\nabla \w_\nu}^2_{L^2} \, dsdt  \\
		&\quad + \nu^{-1}T  \norm{f_\nu}^2_{L^2(0,T;\HH)}  + \bar{\rho}T^2.
	\end{align*}	
	Using Lemma~\ref{lem:vorticitybound} with $t=\delta$, we can upper bound the first term on the right hand side so that we obtain
	\begin{equation*}
		\widetilde{\E}\int_{\delta\wedge \tau^M}^{T\wedge \tau^M}\norm{\w_\nu(t\wedge \tau^M)}^2_{L^2}dt \le
	\frac{C}{\delta \nu} - \nu\widetilde{\E}\int_{\delta\wedge \tau^M}^{T\wedge \tau^M}\!\!\int_{\delta\wedge \tau^M}^{t\wedge\tau^M} \norm{\nabla \w_\nu}^2_{L^2} \, dsdt  + \nu^{-1}T  \norm{f_\nu}^2_{L^2(0,T;\HH)}  + \bar{\rho}T^2.
	\end{equation*}

	Now, we use the enhanced Poincar\'e Lemma~\ref{lem:poincaremean}, to bound this by
	\begin{multline*}
			\widetilde{\E}\int_{\delta\wedge \tau^M}^{T\wedge \tau^M}\norm{\w_\nu(t\wedge \tau^M)}^2_{L^2}dt \le \frac{C}{\delta \nu}\\
			-\nu\gamma\left(\widetilde{\E}\int_{\delta\wedge \tau^M}^{T\wedge \tau^M}\!\!\int_{\delta\wedge \tau^M}^{t\wedge\tau^M} \norm{ \w_\nu}^2_{L^2} \, dsdt\right) \left(\widetilde{\E}\int_{\delta\wedge \tau^M}^{T\wedge \tau^M}\!\!\int_{\delta\wedge \tau^M}^{t\wedge\tau^M} \norm{ \w_\nu}^2_{L^2} \, dsdt\right)^2
			+ \nu^{-1}T  \norm{f_\nu}^2_{L^2(0,T;\HH)}  + \bar{\rho}T^2 .
	\end{multline*}
		We multiply this inequality by $\nu$ and use that the left hand side is nonnegative, thus after rearranging 
	\begin{equation*}
	\gamma\left(\widetilde{\E}\int_{\delta\wedge \tau^M}^{T\wedge \tau^M}\!\!\int_{\delta\wedge \tau^M}^{t\wedge\tau^M} \norm{ \w_\nu}^2_{L^2} \, dsdt\right) \left(\nu\widetilde{\E}\int_{\delta\wedge \tau^M}^{T\wedge \tau^M}\!\!\int_{\delta\wedge \tau^M}^{t\wedge\tau^M} \norm{ \w_\nu}^2_{L^2} \, dsdt\right)^2 \le
		\frac{C}{\delta}  .
	\end{equation*}
	To show that
	\begin{equation*}
		\limsup_{\nu\to 0} \nu\widetilde{\E}\int_{\delta\wedge \tau^M}^{T\wedge \tau^M}\!\!\int_{\delta\wedge \tau^M}^{t\wedge\tau^M} \norm{ \w_\nu}^2_{L^2} \, dsdt =0,
	\end{equation*}
	we follow the argument in~\cite[Lemma 3.11]{Jin2024}. We denote 
	$$
	\zeta_\nu=\nu\widetilde{\E}\int_{\delta\wedge \tau^M}^{T\wedge \tau^M}\!\!\int_{\delta\wedge \tau^M}^{t\wedge\tau^M} \norm{ \w_\nu}^2_{L^2} \, dsdt,
	$$
	and assume by contradiction that $\limsup_{\nu\to 0} \zeta_\nu \geq 2 \epsilon_0>0$. Then we can find a subsequence $\{\nu_k\}_{k\in \N}$ such that $\zeta_{\nu_k}\geq \epsilon_0$ for all $k\in \N$. Thus, we must have that
	\begin{equation*}
	\epsilon_0^2\gamma_0\leq 	\epsilon_0^2 \gamma\left(\frac{\epsilon_0}{\nu_k}\right)\leq \zeta_{\nu_k}^2 \gamma\left(\frac{\zeta_{\nu_k}}{\nu_k}\right)\leq \frac{C}{\delta}.
	\end{equation*}
	As $\nu_k\to 0$, we must have ${\epsilon_0}/{\nu_k}\to \infty$, thus by the properties of $\gamma$, we obtain $\lim_{k\to \infty}\gamma(\zeta_{\nu_k}/\nu_k)=\infty$. Hence the left hand side goes to infinity as $k\to \infty$ while the right hand side is uniformly bounded by $M/\delta<\infty$.

		This is a contradiction and hence we must have $\limsup_{\nu\to 0}\zeta_\nu = 0$. This proves that VIa and hence the dissipation term VI  vanishes. 
Thus passing to the limit $\nu\to 0$ in all the terms in~\eqref{eq:doweneedalabel}, we obtain
\begin{multline}\label{eq:Mapprox}
	 \underbrace{\widetilde{\E}\int_0^{T\wedge \tau^M}\theta(t)\norm{\widetilde{u}(t)}_{L^2(\dom)}^2 dt}_{\text{(i)}}
= \underbrace{\widetilde{\E}\int_0^{T\wedge \tau^M}\theta(t)\norm{\widetilde{\bar{u}}}_{L^2(\dom)}^2 dt }_{\text{(ii)}}
	\\
	 +  2\underbrace{\widetilde{\E}\int_0^{T\wedge \tau^M}\theta(t)\int_{0}^{t \wedge \tau^M}(f,\widetilde{u}) ds dt }_{\text{(iii)}} + \underbrace{\widetilde{\E}\int_0^{T\wedge \tau^M}\theta(t)\bar{\sigma} t dt }_{\text{(iv)}} 
	\\
	  +   2\underbrace{\widetilde{\E}\int_{0}^{T\wedge \tau^M}\theta(t)\int_{0}^{t\wedge \tau^M} \widetilde{u} \sigma d\widetilde{W}(s) 		 dt  }_{\text{(v)}}+\mathcal{O}(\epsilon).
\end{multline}
Since $\epsilon>0$ was arbitrary, this holds as an equality.
 Next, we want to send $M\to \infty$. We  will show first that
\begin{equation}\label{eq:neverstops}
\widetilde{\P}[\lim_{M\to \infty} \tau^M = T] = 1,
\end{equation}
thus $\tau^M\to T$ as $M\to \infty$ almost surely. 
We note that by Chebyshev inequality, for any $M>0$, we have
\begin{equation*}
	\widetilde{\P}[\tau^M < T ]  = \widetilde{\P}\left[\int_0^T \norm{\widetilde{u}(s)}_{L^2}^2 ds \geq M\right] \leq \frac{1}{M}\widetilde{\E}\int_0^T \norm{\widetilde{u}(s)}_{L^2}^2 ds \leq \frac{C}{M},
\end{equation*}
where the last inequality follows (for example) from~\eqref{eq:linfl2ubound}. Sending $M\to \infty$, we obtain in this inequality
\begin{equation*}
	\lim_{M\to\infty} 	\widetilde{\P}[\tau^M < T ] = 0.
\end{equation*}
Since $\tau^M$ is monotone increasing in $M$, we conclude that
\begin{equation*}
	\widetilde{\P}[\lim_{M\to \infty} \tau^M < T] = 	\lim_{M\to\infty} 	\widetilde{\P}[\tau^M < T ] = 0,
\end{equation*}
thus~\eqref{eq:neverstops} holds. Now we can send $M\to\infty$ in all the terms in~\eqref{eq:Mapprox}. We assume for the moment that $\theta$ is nonnegative.  For the first term, (i), since $\int_0^{T\wedge \tau^M}\theta(t)\norm{\widetilde{u}(t)}_{L^2}^2 dt$ is increasing in $M$ and converges to $\int_0^{T}\theta(t)\norm{\widetilde{u}(t)}_{L^2}^2 dt$ almost surely, we can use the monotone convergence theorem and  pass to the limit $M\to \infty$ and obtain
\begin{equation*}
	\text{(i)}\stackrel{M\to \infty}{\longrightarrow}\widetilde{\E}\int_0^{T}\theta(t)\norm{\widetilde{u}(t)}_{L^2}^2 dt.
\end{equation*}
In the same way, we establish convergence of terms (ii) and (iv):
\begin{equation*}
	\text{(ii)}\stackrel{M\to \infty}{\longrightarrow}\widetilde{\E}\int_0^{T}\theta(t)\norm{\widetilde{\bar{u}}}_{L^2}^2 dt, \quad 	\text{(iv)}\stackrel{M\to \infty}{\longrightarrow}\widetilde{\E}\int_0^{T}\theta(t)\bar{\sigma} t  dt.
\end{equation*}
	For terms (iii) and (v), we first note that we can replace $t\wedge \tau^M$ in the inner integral by $t$. 
	Then, we bound the integrands uniformly in $\tau^M$. For (iii), we have
	\begin{equation*}
		\left|\int_0^{T\wedge \tau^M}\theta(t) \int_0^t (f,\widetilde{u}) ds dt \right| \leq 
		T\norm{\theta}_{L^\infty}\norm{f}_{L^2(0,T;\HH)} \norm{\widetilde{u}}_{L^2(0,T;\HH)},
	\end{equation*}
	which is integrable with respect to $d\widetilde{\P}$ by~\eqref{eq:linfl2ubound}, thus we can use the dominated convergence theorem to pass to the limit in (iii). For the integrand in (v), we have by the Cauchy-Schwarz inequality
	\begin{equation*}\begin{split}
	 \left|	\int_{0}^{T\wedge \tau^M}\theta(t)\int_{0}^{t} \widetilde{u} \sigma d\widetilde{W}(s) 		 dt\right| &\leq \left(\int_0^{T\wedge \tau^M}|\theta(t)|^2 dt\right)^{1/2}\left(\int_0^{T\wedge\tau^M}\left(\int_0^t\widetilde{u}\sigma d\widetilde{W}(s)\right)^2 dt \right)^{1/2}\\
	&\leq \sqrt{T}\norm{\theta}_{L^\infty}\left(\int_0^{T}\left(\int_0^t\widetilde{u}\sigma d\widetilde{W}(s)\right)^2 dt \right)^{1/2}
		\end{split}
	\end{equation*}
	which is integrable in $d\widetilde{\P}$ thanks to the It\^o isometry~\cite[Equation (4.30)]{DaPrato2014} and the uniform bound~\eqref{eq:linfl2ubound}. Thus, we can also send $M\to\infty$ in the term (v) using the dominated convergence theorem once more. Combining all of this, we obtain in the limit $M\to\infty$ for~\eqref{eq:Mapprox},
	\begin{multline}\label{eq:nomoreapprox}
		\widetilde{\E}\int_0^{T}\theta(t)\norm{\widetilde{u}(t)}_{L^2(\dom)}^2 dt
		= \widetilde{\E}\int_0^{T}\theta(t)\norm{\widetilde{\bar{u}}}_{L^2(\dom)}^2 dt 
		\\
		+  2\widetilde{\E}\int_0^{T}\theta(t)\int_{0}^{t }(f,\widetilde{u}) ds dt+ 2\widetilde{\E}\int_0^{T}\theta(t)\bar{\sigma} t dt 
		+   2\widetilde{\E}\int_{0}^{T}\theta(t)\int_{0}^{t} \widetilde{u} \sigma d\widetilde{W}(s) 		 dt .
	\end{multline}
	For nonpositive $\theta$, we can multiply identity~\eqref{eq:Mapprox} by $-1$ and repeat the argument above and general $\theta\in C([0,T])$ can be decomposed into their  nonnegative and nonpositive part via $\theta = \theta^+-\theta^-$ where $\theta^+(t) = \max(0,\theta(t))$ and $\theta^-(t)=\min(0,\theta(t))$. Note that the last term in~\eqref{eq:nomoreapprox} is zero since $\widetilde{\E}\int_0^t \widetilde{u}\sigma d\widetilde{W}=0$.
Now finally $\theta$ gets its big moment. 
Since $\theta\in C([0,T])$ can be chosen arbitrary, it follows from~\eqref{eq:nomoreapprox} that for almost all $t\in [0,T]$ it holds that
	\begin{equation*}
			\widetilde{\E}\norm{\widetilde{u}(t)}_{L^2(\dom)}^2 
		= \widetilde{\E}\norm{\widetilde{\bar{u}}}_{L^2(\dom)}^2 
		+  2\widetilde{\E}\int_{0}^{t }(f,\widetilde{u}) ds + \bar{\sigma} t 	,
	\end{equation*}
	which is what we wanted to prove.

\end{proof}
\begin{remark}[Algebraic decay of structure functions]
	If the modulus of continuity is of the form $\phi(r) = r^\alpha$ for some $\alpha>0$, one can replace condition~\eqref{eq:structurefcn} by
	\begin{equation}\label{eq:algebraicdecay}
		\E S^T_2(u_\nu,r)^2\leq C \phi(r)^2,\quad \forall \, 0\leq r\leq r_0.
	\end{equation}
	In fact, condition~\eqref{eq:structurefcn} is used in its precise form only to derive tightness of the measures $\mu_\nu = \P\circ u_\nu^{-1}$  on $L^2(0,T;\HH)$ as in~\eqref{eq:tightness}. Condition~\eqref{eq:algebraicdecay} implies that $\E \int_0^T |u_\nu|_{H^s}^2 dt<\infty$ uniformly in $\nu>0$ for $0<s<\alpha$ as we will show in the upcoming Lemma~\ref{Sobolev-str_fcn}. Since $H^s(\dom)$ is a compact subset of $L^2(\dom)$ for $s>0$, one can derive 
		\begin{equation*}
		\P\left[   	\sup_{t\in[0,T]}\norm{u_\nu}_{L^2}^p+ \int_0^T |u_\nu|_{H^s}^2 dt +	\norm{u_\nu}_{Z}\leq M_\epsilon\right]\geq 1-\epsilon.
	\end{equation*}
 for any $\epsilon>0$	instead of~\eqref{eq:tightness}, which then implies tightness of the measures $\mu_\nu$ on $L^2(0,T;\HH)$ using a variant of Proposition~\ref{prop:moctimedep} (c.f. also~\cite[Theorem 2.1]{Flandoli1995}).
\end{remark}

\begin{lemma} \label{Sobolev-str_fcn}
	The following relation between structure function and Sobolev spaces holds:
	$$\E\int_0^T |v(t)|_{W^{\alpha,p}}^p dt  <\infty \implies \forall \; r>0: \E S_p^T(v(t);r)^p\le Cr^{p\alpha} \implies \forall \; s<\alpha: \E\int_0^T|v(t)|_{W^{s,p}}^p dt <\infty.$$
\end{lemma}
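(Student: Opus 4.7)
The plan is to unpack both implications directly from the Gagliardo--Slobodeckij representation
\begin{equation*}
|v(t)|_{W^{\alpha,p}}^p = \int_\dom \int_{B_R(0)} \frac{|v(t,x+h)-v(t,x)|^p}{|h|^{d+p\alpha}}\, dh\, dx,
\end{equation*}
where $v$ is extended periodically and $R$ is chosen so that $B_R(0)$ covers a fundamental domain of $\dom$ (the contribution from $|h|>R$ reduces by periodicity to a $\norm{v}_{L^p}$-term). Both arrows are thus really statements about how fast $|v(\cdot+h)-v(\cdot)|^p$ concentrates as $h\to 0$, and the structure function $S_p^T(v;r)^p$ can be rewritten in $h$ in terms of this singular kernel.

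For the first implication, the key observation is the pointwise bound $|h|^{-(d+p\alpha)}\geq r^{-(d+p\alpha)}$ valid for $h\in B_r(0)$, which gives
\begin{equation*}
S_p^T(v;r)^p \leq r^{d+p\alpha}\int_0^T |v(t)|_{W^{\alpha,p}}^p\, dt.
\end{equation*}
Taking expectations yields $\E S_p^T(v;r)^p\leq C r^{d+p\alpha}$; on the bounded range of relevant $r$ the factor $r^d$ is absorbed into the constant, giving the stated $\leq C r^{p\alpha}$.

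For the second implication I would use a dyadic decomposition $B_R(0)=\bigcup_{k\geq 0}A_k$ with $r_k=2^{-k}R$ and $A_k=B_{r_k}\setminus B_{r_{k+1}}$. On each annulus $|h|\geq r_{k+1}$, so
\begin{equation*}
\int_{A_k}\frac{|v(x+h)-v(x)|^p}{|h|^{d+sp}}\, dh \leq 2^{d+sp}\, r_k^{-(d+sp)}\int_{B_{r_k}}|v(x+h)-v(x)|^p\, dh .
\end{equation*}
Integrating in $x$ and $t$ and taking expectation,
\begin{equation*}
\E\int_0^T\!\!\int_\dom\!\!\int_{A_k}\frac{|v(x+h)-v(x)|^p}{|h|^{d+sp}}\, dh\, dx\, dt \leq 2^{d+sp}\, r_k^{-(d+sp)}\, \E S_p^T(v;r_k)^p \leq C\, r_k^{p(\alpha-s)},
\end{equation*}
using the sharper bound $\E S_p^T(v;r)^p\leq C r^{d+p\alpha}$ from the first step. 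Summing over $k$ gives a geometric series that converges precisely because $s<\alpha$; the tail $|h|>R$ contributes $\leq C R^{-sp}\E\int_0^T\norm{v(t)}_{L^p}^p dt$, which is finite.

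The main subtlety I anticipate is one of normalization. The stated bound $\E S_p^T(v;r)^p\leq C r^{p\alpha}$ is the natural scaling for the \emph{normalized} structure function (defined with $\fint_{B_r(0)}$), whereas Definition~\ref{def:structurefunction} uses the unnormalized $\int_{B_r(0)}$, which actually furnishes the sharper scaling $r^{d+p\alpha}$. The second implication genuinely requires this sharper form---if one only had $r^{p\alpha}$, the integral $\int_0^R r^{p\alpha-d-sp-1}\, dr$ would converge at the origin only for $s<\alpha-d/p$---so the chain closes consistently only if one tracks the volume factor $|B_r|\sim r^d$ carefully through both implications.
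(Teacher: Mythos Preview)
Your proof is correct and follows essentially the same route as the paper: the pointwise kernel bound $|h|^{-(d+p\alpha)}\geq r^{-(d+p\alpha)}$ on $B_r(0)$ for the first implication, and a dyadic annular decomposition summing a geometric series in $2^{-k(\alpha-s)p}$ for the second. Your remark on normalization is well taken---the paper's own proof tacitly passes to the averaged form $\fint_{B_{r_i}}$ when identifying the annular contribution with $S_p^T(v;r_i)^p$, so the argument is really carried out in the normalized convention you describe, and the $r^d$ volume factor must indeed be tracked for the series to converge on the full range $s<\alpha$.
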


\begin{proof}
	For the first implication observe that
	\begin{align*}
		\left( \frac{S_p(v;r)}{r^\alpha} \right)^p 
		&= C \int_{D}\int_{B_r(0)} \frac{|v(x+h)-v(x)|^p}{r^{d+p\alpha}} \, dh \, dx\\
		&\le C \int_{D}\int_{D} \frac{|v(x+h)-v(x)|^p}{|h|^{d+p\alpha}} \, dh \, dx\\
		&= C |v|_{W^{\alpha,p}}^p,
	\end{align*}
	where we used the Sobolev--Slobodeckij seminorm in the last line. Integrating over time and taking expectations yields the first implication.
	
	For the second implication let $r_0 = \diam(D) = \frac{\sqrt{2}}{2}$ and $r_i = 2^{-i}r_0$ for $i>0$, and denote by $A_{r_i}$ the annulus around $0$ with radii $r_{i}$ and $r_{i+1}$. Then
	\begin{align*}
		\E\int_0^T|v|_{W^{s,p}}^pdt
		&= \E\int_0^T\int_{D}\int_{D} \frac{|v(x+h)-v(x)|^p}{|h|^{d+sp}} \, dh \, dx\, dt\\
		&\le \E\int_0^T\int_{D}\sum_{i=0}^\infty \int_{A_{r_{i}}} \frac{|v(x+h)-v(x)|^p}{r_{i+1}^{d+sp}} \, dh \, dx\, dt\\
		&\le C\sum_{i=0}^\infty r_{i+1}^{-sp} \E \int_0^T \int_{D}\fint_{B_{r_i}} |v(x+h)-v(x)|^p dt \, dh \, dx\\
		&= C\sum_{i=0}^\infty r_{i+1}^{-sp}\, \E \,S_p^T(v;r_i)^p\\
		&\le C\sum_{i=0}^\infty r_{i+1}^{(\alpha - s)p}\\
		&< \infty,
	\end{align*}
    for any $s<\alpha$.

\end{proof}

\begin{proposition}
	\label{prop:energybalancetotightness}
	Let $\{u_\nu\}_{\nu>0}$ be a sequence of solutions of the stochastic Navier-Stokes equations as in Definition~\ref{def:strongsolNS}. 
	Assume that $f_\nu\to f$ in $L^1(0,T;\HH)$ with $f,f_\nu\in L^2(0,T;\HH)$ and that $\bar{u}_\nu\to \bar{u}$ in $L^2(\Omega;L^2_{\Div}(\dom))$.
	Assume that the laws $\mu_\nu: =\P\circ u_\nu^{-1}$ are weakly tight on $\mathcal{P}(C([0,T];\HH_w)\cap L^2(0,T;\HH)_w)$ and converge weakly in $\mathcal{P}(C([0,T];\HH_w)\cap L^2(0,T;\HH)_w)$ to $\mu:=\P\circ u^{-1}$ that satisfies
		\begin{equation}
		\label{eq:energybalanceeulerprop}
		\E \norm{u(t)}_{L^2}^2 = \E\norm{\bar{u}}_{L^2}^2 +2\E\int_0^t (u,f) ds +\bar{\sigma} t.
	\end{equation}
	Then $\mu_\nu$ are a tight sequence on $\mathcal{P}(L^2(0,T;\HH))$, i.e., with respect to the strong topology on $L^2(0,T;\HH)$. 
\end{proposition}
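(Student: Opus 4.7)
The strategy is the converse of the Hilbert-space principle ``weak convergence plus convergence of norms implies strong convergence''. Applying Skorokhod's theorem first gives an almost surely weakly convergent representation of $\{u_\nu\}$; the assumed Euler energy balance for $u$ combined with the Navier--Stokes energy balance for $u_\nu$ will force convergence of $\widetilde\E\norm{\widetilde u_\nu}_{L^2(0,T;\HH)}^2$ to $\widetilde\E\norm{\widetilde u}_{L^2(0,T;\HH)}^2$, which upgrades to strong convergence in $L^2(\widetilde\Omega;L^2(0,T;\HH))$ and hence to tightness of $\{\mu_\nu\}$ in the norm topology.

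I would first invoke the Skorokhod representation theorem, as in the proof of Theorem~\ref{prop:tightnesstoenergybalance}, to construct a probability space $(\widetilde\Omega,\widetilde{\mathcal F},\widetilde\P)$ and random variables $\widetilde u_\nu\sim\mu_\nu$, $\widetilde u\sim\mu$ with $\widetilde u_\nu\to\widetilde u$ $\widetilde\P$-almost surely in $C([0,T];\HH_w)\cap L^2(0,T;\HH)_w$. Since these properties depend only on the law, $\widetilde u_\nu$ inherits the NS energy balance~\eqref{eq:nsenergybalance} in mean and the uniform bound $\widetilde\E\sup_{t\in[0,T]}\norm{\widetilde u_\nu}_{L^2}^2\le C$ from~\eqref{eq:linfl2ubound}. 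The latter promotes the a.s.\ weak $L^2_{t,x}$ convergence to weak convergence of $\widetilde u_\nu\to\widetilde u$ in $L^2(\widetilde\Omega;L^2(0,T;\HH))$.

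Next I integrate~\eqref{eq:nsenergybalance} over $t\in[0,T]$ and take expectations,
\begin{equation*}
\widetilde\E\!\int_0^T\!\norm{\widetilde u_\nu(t)}_{L^2}^2 dt+2\nu\widetilde\E\!\int_0^T\!\!\int_0^t\!\norm{\Grad\widetilde u_\nu}_{L^2}^2 ds\,dt = T\widetilde\E\norm{\bar u_\nu}_{L^2}^2+2\widetilde\E\!\int_0^T\!\!\int_0^t\!(\widetilde u_\nu,f_\nu)ds\,dt+\tfrac{\bar\sigma T^2}{2},
\end{equation*}
and pass to the limit $\nu\to 0$ in each term on the right. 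For the forcing integral I split $(\widetilde u_\nu,f_\nu)=(\widetilde u_\nu-\widetilde u,f)+(\widetilde u_\nu,f_\nu-f)+(\widetilde u,f)$: the cross term $(\widetilde u_\nu-\widetilde u,f)$ vanishes upon integration by the weak $L^2(\widetilde\Omega;L^2(0,T;\HH))$ convergence, and the piece involving $f_\nu-f$ is bounded in absolute value by $T\widetilde\E[\sup_t\norm{\widetilde u_\nu}_{L^2}]\cdot\norm{f_\nu-f}_{L^1(0,T;\HH)}\to 0$. Dropping the nonnegative viscous dissipation and using the assumed energy balance~\eqref{eq:energybalanceeulerprop} (integrated in $t$) to rewrite the right-hand side as $\widetilde\E\int_0^T\norm{\widetilde u(t)}_{L^2}^2 dt$ yields $\limsup_{\nu\to 0}\widetilde\E\norm{\widetilde u_\nu}_{L^2(0,T;\HH)}^2\le\widetilde\E\norm{\widetilde u}_{L^2(0,T;\HH)}^2$, while Fatou's lemma gives the reverse inequality for the liminf. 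Hence the norms converge, and as a byproduct $\nu\widetilde\E\iint\norm{\Grad\widetilde u_\nu}_{L^2}^2\to 0$.

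Finally, expanding
\begin{equation*}
\widetilde\E\norm{\widetilde u_\nu-\widetilde u}_{L^2(0,T;\HH)}^2=\widetilde\E\norm{\widetilde u_\nu}_{L^2(0,T;\HH)}^2-2\widetilde\E(\widetilde u_\nu,\widetilde u)_{L^2(0,T;\HH)}+\widetilde\E\norm{\widetilde u}_{L^2(0,T;\HH)}^2,
\end{equation*}
the cross term converges to $\widetilde\E\norm{\widetilde u}_{L^2(0,T;\HH)}^2$ by the weak $L^2(\widetilde\Omega;L^2(0,T;\HH))$ convergence and the first term by the previous step, so $\widetilde u_\nu\to\widetilde u$ strongly in $L^2(\widetilde\Omega;L^2(0,T;\HH))$. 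This implies convergence in $\widetilde\P$-probability in the norm topology of $L^2(0,T;\HH)$, and since $\widetilde u_\nu\stackrel{d}{=}u_\nu$, the laws $\mu_\nu$ converge narrowly to $\mu$ in $\mathcal P(L^2(0,T;\HH))$ with the strong topology; in particular they are tight. The main obstacle I anticipate is justifying the passage to the limit in the bilinear forcing term given only weak convergence of $\widetilde u_\nu$ and merely strong $L^1_t$ convergence of $f_\nu$; the three-way split combined with the uniform $\widetilde\E\sup_t\norm{\widetilde u_\nu}_{L^2}^2$ bound (which relies on~\eqref{eq:linfl2ubound} and Lemma~\ref{lem:tsupL2boundu}) is what reconciles these two types of convergence.
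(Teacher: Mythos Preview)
Your argument is correct and follows the same core mechanism as the paper's proof: show that $\E\norm{u_\nu}_{L^2(0,T;\HH)}^2\to\E\norm{u}_{L^2(0,T;\HH)}^2$ by comparing the Navier--Stokes energy balance with the assumed Euler energy balance, and then use the Hilbert-space principle ``weak convergence plus convergence of norms implies strong convergence'' to upgrade to tightness in the strong topology.

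The packaging differs, however. The paper never invokes Skorokhod here: it stays at the level of the laws $\mu_\nu$, shows $\int_X\norm{u(t)}_{L^2}^2\,d\mu_\nu\to\int_X\norm{u(t)}_{L^2}^2\,d\mu$ pointwise in $t$ (using \cite[Lemma~5.1.12(d)]{Ambrosio2008} to handle the weakly continuous functional $u\mapsto\int_0^t(f,u)\,ds$), integrates in $t$ by dominated convergence, and then applies the abstract result \cite[Theorem~5.1.13]{Ambrosio2008} directly to conclude $\mu_\nu\to\mu$ in $\mathcal P(L^2(0,T;\HH))$ with the strong topology. Your route instead introduces a Skorokhod representation, integrates the energy balance in $t$ first, and carries out the ``weak plus norms'' argument by hand in the Hilbert space $L^2(\widetilde\Omega;L^2(0,T;\HH))$. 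What your approach buys is a more self-contained argument (no need to quote \cite{Ambrosio2008}) and an explicit treatment of the lower bound via weak lower semicontinuity, which the paper leaves implicit; what it costs is the extra Skorokhod step, which on the space $C([0,T];\HH_w)\cap L^2(0,T;\HH)_w$ requires a word of justification (e.g.\ Jakubowski's version, or restricting to balls where the weak topology is metrizable).
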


\begin{proof}
	We pick a subsequence, still denoted by $\nu$ such that $\mu_\nu\to\mu$ weakly on $\mathcal{P}(C(0,T;\HH_w)\cap L^2(0,T;\HH)_w)$ and $f_\nu\to f$ in $L^1(0,T;\HH)$. Denote $X: =C(0,T;\HH_w)\cap L^2(0,T;\HH)$. 
We subtract~\eqref{eq:energybalanceeulerprop} from~\eqref{eq:NSenergybalance} to obtain
\begin{equation*}
		\begin{split}
		\E \norm{u_\nu(t)}_{L^2}^2-\E \norm{u(t)}_{L^2}^2 &= \E \norm{\bar{u}_\nu}_{L^2}^2 - 2\nu\E\int_0^t \norm{\Grad u_\nu(s)}_{L^2}^2 ds + 2\E\int_0^t (u_\nu , f_{\nu} ) ds -\E\norm{\bar{u}}_{L^2}^2 -2\E\int_0^t (u,f) ds\\
		& \leq  \E \norm{\bar{u}_\nu}_{L^2}^2  + 2\E\int_0^t (u_\nu , f_{\nu} ) ds -\E\norm{\bar{u}}_{L^2}^2 -2\E\int_0^t (u,f) ds.
		\end{split}
\end{equation*}
Using the assumption on the convergence of the initial data, we have 
\begin{equation*}
	\E\norm{\bar{u}_\nu}_{L^2}^2 - \E\norm{\bar{u}}_{L^2}^2 \stackrel{\nu\to 0}{\longrightarrow } 0.
\end{equation*}
Since $f_\nu\to f$ in $L^1(0,T;L^2(\dom))$, we have
\begin{equation*}
\left|	\E\int_0^t (u_\nu,f-f_\nu) ds\right| \leq \E\sup_{t\in [0,T]} \norm{u_\nu}_{L^2(\dom)}\int_0^t \norm{f-f_\nu}_{L^2(\dom)} ds \to 0,\quad \text{as }\,\nu\to 0,
\end{equation*}
using~\eqref{eq:linfl2ubound}.
Moreover,
\begin{equation*}
	\E \int_0^t (u-u_\nu,f) ds=\int_X \int_0^t (u,f) ds d\mu(u) -  \int_X \int_0^t (u,f) ds d\mu_\nu(u) \to 0,\quad \text{as }\,\nu\to 0,
\end{equation*}
since $\mu_\nu$ converges to $\mu$ in $\mathcal{P}(X_{w})$. This follows from~\cite[Lemma 5.1.12 (d)]{Ambrosio2008}, since
\begin{equation*}
	\psi(u) = \int_0^t(f,u)ds
\end{equation*}
is weakly continuous on bounded sets of $X$ and $|\psi(u)|$ is uniformly integrable on $X$. 
Thus
\begin{equation*}
	\E\int_0^t(f_\nu,u_\nu) ds-\E\int_0^t (f,u) ds \to 0,\quad \text{as }\, \nu\to 0.
\end{equation*}
We obtain that
\begin{equation}\label{eq:normconv}
	\E \norm{u_\nu(t)}_{L^2}^2 = \int_X \norm{u(t)}_{L^2}^2d\mu_\nu(u)\stackrel{\nu\to 0}{\longrightarrow}\int_X\norm{u(t)}_{L^2}^2 d\mu(u) = \E\norm{u(t)}_{L^2}^2.	
\end{equation}
Ths is true for any $t\in [0,T]$. Since $\E \norm{u_\nu(t)}_{L^2}^2$ are uniformly bounded by the energy balance, we can use the dominated convergence theorem to derive
\begin{equation*}
	\int_0^T\E \norm{u_\nu(t)}_{L^2}^2 dt =  \int_X \norm{u}_{L_t^2L_x^2}^2 d\mu_\nu(u)\stackrel{\nu\to 0}{\longrightarrow}\int_X\norm{u}_{L_t^2L^2_x}^2 d\mu(u) = \int_0^T\E\norm{u(t)}_{L^2}^2dt.
\end{equation*}
Now we can apply~\cite[Theorem 5.1.13]{Ambrosio2008} with $X=L^2(0,T;\HH)$ and $j(x) = x^2$ to obtain that $\mu_\nu\to \mu$ on $\mathcal{P}(X)$, in particular, with respect to the strong topology on $L^2(0,T;\HH)$.

\end{proof}
\begin{remark}
	We note that in this case $t\mapsto \E\norm{u(t)}_{L^2}^2$ is continuous thanks to the assumption of the energy balance~\eqref{eq:energybalanceeulerprop} and the fact that the right hand side of~\eqref{eq:energybalanceeulerprop} is continuous with respect to $t$. 
	Moreover, we have that $t\mapsto \E (u(t),v)$ is continuous for any $v\in L^2(\Omega;\HH)$ that is measurable with respect to $\mathcal{F}$ since $u$ is weakly continuous with values in $\HH$, c.f. the argument for~\eqref{eq:weakcont}. Thus
	\begin{equation*}
	\lim_{s\to t}	\E \norm{u(t)-u(s)}_{L^2}^2 = 	\E\norm{u(t)}_{L^2}^2 + \lim_{s\to t}	\left(\E \norm{u(s)}_{L^2}^2 -2 \E (u(t),u(s))\right) = 0,
	\end{equation*}
	which implies that $\norm{u(t)-u(s)}_{L^2}^2\to 0$ a.s. as $s\to t$. Thus $u\in C(0,T;\HH)$ almost surely. 
	\end{remark}

\section{Numerical Experiments}\label{sec:num}

We simulate the Navier-Stokes equations \eqref{SNS} in the domain $[0,1]^2$ with periodic boundary conditions using the highly efficient GPU based solver azeban \cite{rohner2024}.

Fix a mesh width $\delta = \frac{1}{N}$ for some $N \in \mathbb{N}$. We consider the following spectral discretization of the Navier-Stokes equations in the Fourier domain
\begin{equation} \label{eq:shv_disc}
    \begin{cases}
        du_{\nu}^{\delta} + \mathcal{P}_N(u_{\nu}^{\delta}\cdot\nabla u_{\nu}^{\delta}) + \nabla p_{\nu}^{\delta} &= \nu\Delta u_{\nu}^{\delta}\,dt + f\,dt + \sigma\cdot dW  \\
        \Div u_{\nu}^{\delta} &= 0 \\
        u_{\nu}^{\delta}|_{t=0} &= \mathcal{P}_N\bar{u}
    \end{cases}
\end{equation}
where $\mathcal{P}_N$ is the spatial Fourier projection operator mapping a function $g(x,t)$ to its first $N$ Fourier modes: $\mathcal{P}_N = \sum_{|k|_{\infty}\leq N} \hat{g}_k(t)e^{ik\cdot x}$.

Note that this scheme merely conserves the divergence of $u_{\nu}^{\delta}$ instead of setting it to zero. For this reason, we implicitly project all initial conditions onto divergence-free vector fields using the Leray projection.

For the sake of the experiments, we will choose the basis for the stochastic forcing term as

\begin{equation}
    e_{i,j}(x_1,x_2) = \frac{2}{\sqrt{i^2+j^2}} \begin{bmatrix}
        j \cos(2\pi ix_1) \cos(2\pi jx_2) \\
        i \sin(2\pi ix_1) \sin(2\pi jx_2)
    \end{bmatrix}
\end{equation}
indexed by $i,j \in \mathbb{N}$ and set all $b_{i,j}$ corresponding to a basis function with one of the indices $i,j > N_b \in \mathbb{N}$ to zero. All other $b_{i,j}$ are chosen to be equal to $\sigma \in \mathbb{R}_+$. We further set the deterministic forcing term $f=0$. We therefore expect the energy of the system to evolve according to 
\begin{equation}
    \mathbb{E}\|u(t)\|_{L^2}^2 + 2\nu \mathbb{E} \int_0^t \|\nabla u(s)\|_{L^2}^2 \, ds = \mathbb{E}\|\bar{u}\|^2_{L^2} + \bar{\sigma}t, \quad t\ge 0,
\end{equation}
where $\bar{\sigma} = \sum_{i,j} b_{i,j}^2 = N_b^2\sigma^2$ and $\nu$ the artificial viscosity of our numerical solver. For the experiments performed here, we will set $N_b = 9$ leaving us with a total of $9^2 = 81$ non-zero basis functions each having a coefficient $\sigma$. The energy is therefore expected to rise linearly in time with slope $\bar{\sigma} = 81\sigma^2$. To experimentally verify this theoretical value of $\bar{\sigma}$, we compute solutions to the initial conditions given below and compute the value of
\begin{equation}
    \mathbb{E}\|u(t)\|_{L^2}^2 + 2\nu \mathbb{E} \int_0^t \|\nabla u(s)\|_{L^2}^2 \, ds - \mathbb{E}\|\bar{u}\|^2_{L^2}.
\end{equation}
The integral over the gradient is hereby approximated by a simple Riemann sum over a total of 10000 rectangles (40000 for the flat vortex sheet experiment with $\sigma = 0.1$). The norm of the gradient itself is computed in Fourier space making use of Parseval's identity.

Furthermore, all simulations are performed with the three different viscosities $\nu = \frac{0.05}{N}$, $\nu = \frac{0.1}{N}$, and $\nu = \frac{0.2}{N}$ where $N$ is the resolution of the computational grid ($N = 256$ unless otherwise noted). We use 32 different realizations of each experiment.

\subsection{Flat Vortex Sheet}

As a first experiment, we use the Flat Vortex Sheet initial condition from \cite{LMP2021}. The initial data is given by
\begin{equation}
    \begin{aligned}
        \bar{u}^{(1)}(x_1, x_2) &= \begin{cases}
            \tanh\left(2\pi\frac{x_2-0.25}{\rho}\right) &\mbox{ for } x_2+\sigma_{\delta}(x_1) \leq \frac{1}{2} \\
            \tanh\left(2\pi\frac{0.75-x_2}{\rho}\right) &\mbox{ otherwise}
        \end{cases} \\
        \bar{u}^{(2)}(x_1, x_2) &= 0
    \end{aligned}
\end{equation}
where $\sigma_{\delta}: [0,1] \to \mathbb{R}$ is a perturbation of the initial data given by
\begin{equation}
    \sigma_{\delta}(x_1) = \delta \sum_{k=1}^{p} \alpha_k\sin(2\pi kx_1 - \beta_k).
\end{equation}
The random variables $\alpha_k$ and $\beta_k$ are i.i.d. uniformly distributed on $[0,1]$ and $[0,2\pi]$ respectively. The parameters $\delta$ and $p$ are chosen to be $\delta = 0.025$ and $p = 10$. The initial condition is also well defined in the limit $\rho \to 0$ in which case, however, it becomes discontinuous. Here, we perform the experiments with $\rho = 0.1$.

\begin{figure}[ht!]
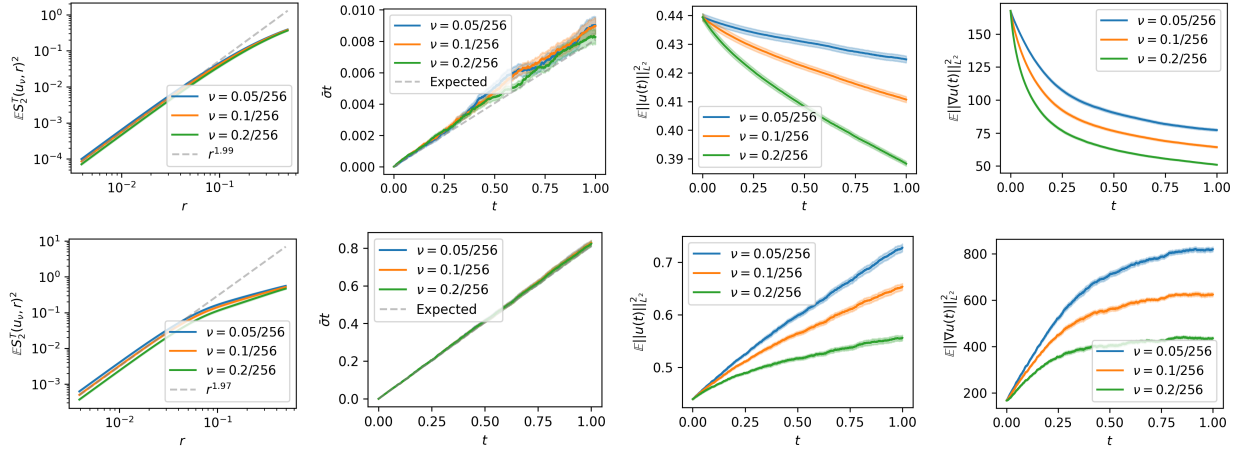

    \centering
    \foreach \s in {0.01, 0.1} {
        \begin{subfigure}{0.245\textwidth}
            \includegraphics[width=\textwidth]{img/ddsl/sigma\s/S2T.png}
        \end{subfigure}
        \begin{subfigure}{0.245\textwidth}
            \includegraphics[width=\textwidth]{img/ddsl/sigma\s/Bt.png}
        \end{subfigure}
        \begin{subfigure}{0.245\textwidth}
            \includegraphics[width=\textwidth]{img/ddsl/sigma\s/E.png}
        \end{subfigure}
        \begin{subfigure}{0.245\textwidth}
            \includegraphics[width=\textwidth]{img/ddsl/sigma\s/grad_L2.png}
        \end{subfigure}
    }
    \caption{Behavior of key quantities of interest for the flat vortex sheet experiment. The top row shows the results for $\sigma = 0.01$, while the bottom row does so for $\sigma = 0.1$. The plots depict from left to right: Second-order time-integrated structure function, measured and predicted energy input due to forcing, total kinetic energy, energy dissipation due to viscosity. The shaded region in the plots depicts one standard deviation in the Monte Carlo approximation of the mean. One can see that the structure functions clearly have a uniform modulus of continuity. Therefore, the experiments are expected to satisfy the energy balance equation. This is verified by the plots in the second column, demonstrating that the actual energy input perfectly follows the predicted one.}
\end{figure}

\subsection{Fractional Brownian Bridges}

This experiment is taken from \cite{Lanthaler2021}. We take Fractional Brownian Bridges as initial measures for each of the velocity components of $u$. These objects can be seen as generalizations for Brownian Bridges which have a tunable energy spectrum $E_k \sim k^{-(2H+1)}$ (in 2D) where the parameter $0 < H < 1$ is called the \textit{Hurst index}. Choosing a higher Hurst index has a regularizing effect on the Brownian Bridge, while a lower Hurst index increases the roughness of the initial condition. For $H = 0.5$, we obtain a standard Brownian Bridge. A realization of a Fractional Brownian Bridge is a fractal object and thus not covered by assumptions of any well-posedness results of the incompressible Euler equations, not even in the case of the 2D equations. Nevertheless, we can numerically verify whether the approximations satisfy a condition like~\eqref{eq:structurefcn} and experimentally test whether the energy balance~\eqref{eq:energybalanceintro} holds.

We generate Brownian bridges directly in Fourier space with the following method:
\begin{equation}
    W(x_1, x_2) = \sum_{|\mathbf{k}|_{\infty} \leq N} \frac{1}{\left\|\mathbf{k}\right\|_2^{H+1}} \sum_{m,n\in\{0,1\}} \alpha_k^{(mn)}\text{sc}_{m}(x_1)\text{sc}_{n}(x_2)
\end{equation}
where
\begin{equation}
    \text{sc}_i(x) = \begin{cases}
        \sin(x) &\mbox{ for } i = 0 \\
        \cos(x) &\mbox{ for } i = 1
    \end{cases}
\end{equation}
and the $\alpha_k^{(mn)} \sim \mathcal{U}_{[-1,1]}$.

The forcing term is kept the same as before. In the following figures, first columns, we observe numerically that the solutions have a modulus of continuity in mean. We thus expect the evolution of the energy to be the same as before, independent of the initial condition. We observe almost perfect correspondence between the theoretical evolution of the energy and the energy of the simulated system. It is probable that part of the deviation stems from the discretization of the integral computing the loss of energy through dissipation due to viscosity. Increasing the number of samples generally improved the accuracy of the fit (especially for the shear layer), but simultaneously increased memory requirements. Discretizing using 10000 samples was found to be a good compromise. We note that as the viscosity $\nu$ is decreased, $\E \norm{u(t)}_{L^2}^2$ more closely follows $\bar{\sigma} t$.
 
\foreach \H in {15, 50, 75} {
    \begin{figure}[ht!]
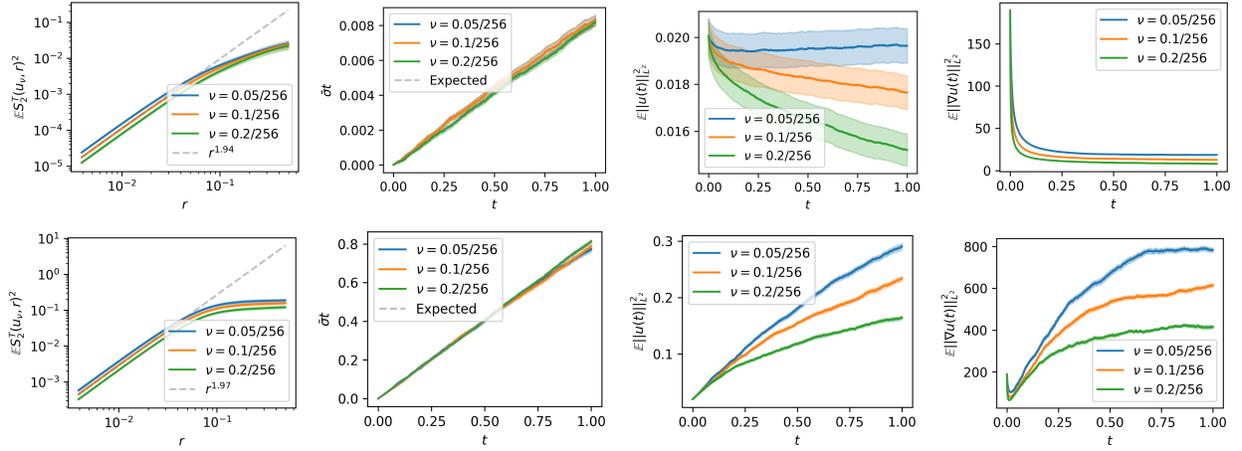

        \centering
        \foreach \s in {0.01, 0.1} {
            \begin{subfigure}{0.245\textwidth}
                \includegraphics[width=\textwidth]{img/bm/H\H/sigma\s/S2T.png}
            \end{subfigure}
            \begin{subfigure}{0.245\textwidth}
                \includegraphics[width=\textwidth]{img/bm/H\H/sigma\s/Bt.png}
            \end{subfigure}
            \begin{subfigure}{0.245\textwidth}
                \includegraphics[width=\textwidth]{img/bm/H\H/sigma\s/E.png}
            \end{subfigure}
            \begin{subfigure}{0.245\textwidth}
                \includegraphics[width=\textwidth]{img/bm/H\H/sigma\s/grad_L2.png}
            \end{subfigure}
        }
        \caption{Results of the fractional Brownian bridge experiment with Hurst index $H = 0.\H$. The plots in the top row correspond to forcing strength $\sigma = 0.01$, and the bottom row to $\sigma = 0.1$. Furthermore, the depicted quantities from left to right are: Second-order time-integrated structure functions, measured and predicted energy input due to forcing, total kinetic energy, energy dissipation due to viscosity. The shaded region depicts one standard deviation around the Monte Carlo approximation of the mean.}
    \end{figure}
}
\newpage\null\newpage
\appendix

\section{Auxiliary results}
\begin{lemma}
	\label{lem:tsupL2boundu}
	Under the assumption that $\E\norm{\bar{u}_\nu}_{L^2}^p<\infty$ for some $p\geq 2$,
	solutions $u_\nu$ of the Navier-Stokes equations~\eqref{SNS} satisfy uniformly in $\nu$
	\begin{equation*}
	\E\sup_{t\in [0,T]}\norm{u_\nu}_{L^2(\dom)}^p\leq C(p).
	\end{equation*}
\end{lemma}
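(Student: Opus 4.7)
The plan is to derive the bound by applying It\^o's formula to a suitable function of $\|u_\nu\|_{L^2}^2$, then taking the supremum in time and controlling the resulting martingale with the Burkholder--Davis--Gundy (BDG) inequality. Since $u_\nu$ is not a priori smooth enough for It\^o's formula in infinite dimensions, the calculations would first be carried out on a Galerkin approximation $u_\nu^m$ (as in the proof of Lemma~\ref{lem:vorticitybound}) where everything is finite-dimensional, and then the uniform-in-$m$ bound would be passed to the limit $m\to\infty$ together with the Galerkin convergence from~\cite[Proposition 2.1]{Bessaih1999}.

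Writing $G(t):=\|u_\nu(t)\|_{L^2}^2$, the standard energy identity~\eqref{eq:NSenergybalance} (in its pathwise form, before taking expectations) gives the It\^o decomposition
\begin{equation*}
dG = \bigl[-2\nu\|\Grad u_\nu\|_{L^2}^2 + 2(f_\nu,u_\nu) + \bar{\sigma}\bigr]dt + 2\sum_k b_k(e_k,u_\nu)\,d\beta_k,
\end{equation*}
with quadratic variation $d\langle G\rangle_t = 4\sum_k b_k^2 (e_k,u_\nu)^2\,dt \le 4\bar{\sigma}\,G(t)\,dt$. Applying the It\^o chain rule to $G^{p/2}$ (which requires $p\ge 2$ for $G^{p/2-2}$ to appear with a bounded coefficient after absorbing one factor of $G$), and dropping the nonpositive viscous term, yields
\begin{equation*}
G(t)^{p/2} \le G(0)^{p/2} + C_p\int_0^t G^{p/2-1}\bigl[|(f_\nu,u_\nu)| + \bar{\sigma}\bigr]\,ds + C_p\,\bar{\sigma}\int_0^t G^{p/2-1}\,ds + p\,M(t),
\end{equation*}
where $M(t) = \sum_k\int_0^t G(s)^{p/2-1} b_k(e_k,u_\nu(s))\,d\beta_k(s)$ is a local martingale.

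Taking the supremum over $[0,t]$, applying BDG together with the bound on $d\langle G\rangle$, and then using Young's inequality gives
\begin{equation*}
\E\sup_{s\le t}|M(s)| \le C\,\E\Bigl(\int_0^t G^{p-2}\sum_k b_k^2(e_k,u_\nu)^2\,ds\Bigr)^{1/2} \le \tfrac{1}{2p}\E\sup_{s\le t}G(s)^{p/2} + C\bar{\sigma}\int_0^t \E\,G(s)^{p/2}\,ds + C,
\end{equation*}
after using $G^{p/2-1}\le 1+G^{p/2}$. The deterministic terms $G^{p/2-1}|(f_\nu,u_\nu)|$ and $\bar{\sigma}\,G^{p/2-1}$ are handled in the same way via Young's inequality, producing a constant depending on $\|f_\nu\|_{L^2(0,T;\HH)}$ and $\bar{\sigma}$ plus a term linear in $\E\sup_{s\le r}G^{p/2}(s)$ after integration. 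Absorbing the $\frac{1}{2p}\E\sup G^{p/2}$ on the left and applying Gronwall's inequality to $\Phi(t):=\E\sup_{s\le t}G(s)^{p/2}$ then gives $\Phi(T)\le C(p,T,\bar{\sigma},\|f_\nu\|_{L^2(0,T;\HH)},\E\|\bar{u}_\nu\|_{L^2}^p)$, which is uniform in $\nu$.

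The main obstacle is not analytic but technical: the It\^o formula for $F(u)=\|u\|_{L^2}^p$ is not directly available at the regularity provided by Definition~\ref{def:strongsolNS} (one has only $u_\nu\in C([0,T];\HH)\cap L^2([0,T];\VV)$). The fix, as mentioned, is to perform the entire computation on the Galerkin projection $u_\nu^m = P_m u_\nu$, where $F$ is a smooth function on $\R^m$ and all manipulations are rigorous, then pass $m\to\infty$ using the uniform bound together with lower semicontinuity of $\E\sup_{[0,T]}\|\cdot\|_{L^2}^p$ under the Galerkin convergence. A minor additional care is needed to localize the martingale $M$ by a stopping time $\tau_R=\inf\{t:G(t)\ge R\}\wedge T$ before applying BDG, and then to send $R\to\infty$ via monotone convergence; this handles the a priori possibility that $M$ is only a local martingale.
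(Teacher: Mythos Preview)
Your proposal is correct and follows essentially the same route as the paper: Galerkin approximation, It\^o's formula for $\|u_\nu^m\|_{L^2}^p$ (your two-step version via $G^{p/2}$ is equivalent), dropping the viscous term, BDG on the martingale term followed by Young's inequality to absorb $\tfrac12\E\sup G^{p/2}$, then Gronwall and passage $m\to\infty$. The only cosmetic differences are that the paper applies It\^o directly to $\|u\|^p$ rather than composing through $G$, and does not spell out the stopping-time localization you mention; otherwise the arguments coincide.
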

\begin{proof}
	We consider a Galerkin approximation $u^m_\nu$ of $u_\nu$ in order to be able to do any formal manipulations (c.f. the proof of Lemma~\ref{lem:vorticitybound}, equation~\eqref{eq:galerkinprojection} and ff.)).
	By It\^o's formula, we have for $p\geq 2$, 
	\begin{equation*}
	d\norm{u^m_\nu}_{L^2}^p\leq p\norm{u_\nu^m}^{p-2}_{L^2}(u_\nu^m,d u_\nu^m)+\frac{1}{2}p(p-1)\norm{u_\nu^m}^{p-2}_{L^2}\norm{\sigma^m}_{L^2}^2dt.
	\end{equation*}
	Thus,
	\begin{multline*}
	d\norm{u^m_\nu}_{L^2}^p +\nu p\norm{u_\nu^m}^{p-2}_{L^2}\norm{\Grad u^m_\nu}_{L^2}^2 dt\leq p\norm{u_\nu^m}^{p-2}_{L^2}(f^m_\nu,u^m_\nu)dt +p\norm{u_\nu^m}^{p-2}_{L^2}(u^m_\nu,\sigma^m)dW\\
	+\frac{1}{2}p(p-1)\norm{u_\nu^m}^{p-2}_{L^2}\norm{\sigma^m}_{L^2}^2dt.
	\end{multline*}
	since $b(u_\nu^m,u_\nu^m,u_\nu^m)=0$. Using Young's inequality, we obtain

	\begin{multline*}
	d\norm{u^m_\nu}_{L^2}^p +\nu p\norm{u_\nu^m}^{p-2}_{L^2}\norm{\Grad u^m_\nu}_{L^2}^2 dt\leq \frac{p}{2}\norm{u_\nu^m}^{p}_{L^2}dt +\frac{p-2}{2}\norm{u_\nu^m}_{L^2}^p\norm{f^m_\nu}_{L^2}^2 dt+\norm{f^m_\nu}_{L^2}^2 dt\\
	+p\norm{u_\nu^m}^{p-2}_{L^2}(u^m_\nu,\sigma^m)dW
	+\left(\frac{1}{2}(p-2)(p-1)\norm{u_\nu^m}^{p}_{L^2}+ p-1\right)\norm{\sigma^m}_{L^2}^2dt.
\end{multline*}
	We integrate in time and take the supremum over time:

	\begin{multline*}
	\sup_{0\leq s\leq t}\norm{u^m_\nu(s)}_{L^2}^p +\nu p\int_0^t\norm{u_\nu^m}^{p-2}_{L^2}\norm{\Grad u^m_\nu}_{L^2}^2 d\tau
	\\
	\leq \norm{u^m_\nu(0)}_{L^2}^p+ \frac{p}{2}\sup_{0\leq s\leq t}\int_0^s\norm{u_\nu^m}^{p}_{L^2}d\tau +\frac{p-2}{2}\sup_{0\leq s\leq t}\int_0^s\norm{f^m_\nu}_{L^2}^2 \norm{u_\nu^m}_{L^2}^p d\tau
	+\int_0^t\norm{f^m_\nu}_{L^2}^2d\tau\\
	+p\sup_{0\leq s\leq t}\int_0^s \norm{u_\nu^m}^{p-2}_{L^2}(u^m_\nu,\sigma^m)dW(\tau)
	+\sup_{0\leq s\leq t}\int_0^s\left(\frac{1}{2}(p-2)(p-1)\norm{u_\nu^m}^{p}_{L^2}+ p-1\right)\norm{\sigma^m}_{L^2}^2d\tau.
\end{multline*}
	We take expectations and then use the Burkholder-Davis-Gundy inequality for the term involving the Brownian motion:
	\begin{equation*}
	\begin{split}
	p\E \sup_{0\leq s\leq t}\left|\int_0^s \norm{u_\nu^m}^{p-2}_{L^2}(u^m_\nu,\sigma^m)dW(\tau)\right|&\leq Cp \E\left(\int_0^t\norm{u^m_\nu}_{L^2}^{2p-2}\norm{\sigma^m}_{L^2}^2 d\tau\right)^{\frac12}\\
	&\leq \frac12\E\sup_{0\leq s\leq t}\norm{u^m_\nu}_{L^2}^p+ Cp^2\int_0^t\E\norm{u_\nu^m}_{L^2}^{p-2}\norm{\sigma^m}_{L^2}^2 d\tau\\
	&\leq \frac12\E\sup_{0\leq s\leq t}\norm{u^m_\nu}_{L^2}^p+ Cp^2\int_0^t\left(\E\norm{u_\nu^m}_{L^2}^{p}+1\right)\norm{\sigma^m}_{L^2}^2 d\tau\\
	&\leq \frac12\E\sup_{0\leq s\leq t}\norm{u^m_\nu}_{L^2}^p+ Cp^2\int_0^t \E\norm{u_\nu^m}_{L^2}^{p} \norm{\sigma^m}_{L^2}^2 d\tau +Cp\int_0^t \norm{\sigma^m}_{L^2}^2 d\tau.
	\end{split}
	\end{equation*}
	Thus, we obtain

	\begin{multline*}
	\frac12\E\sup_{0\leq s\leq t}\norm{u^m_\nu(s)}_{L^2}^p +\nu p\E\int_0^t\norm{u_\nu^m}^{p-2}_{L^2}\norm{\Grad u^m_\nu}_{L^2}^2 d\tau
	\\
	\leq \E\norm{u^m_\nu(0)}_{L^2}^p+ \frac{p}{2}\int_0^t\E\sup_{0\leq s\leq \tau}\norm{u_\nu^m(s)}^{p}_{L^2}d\tau
	+\int_0^t\norm{f^m_\nu}_{L^2}^2d\tau     +\frac{p-2}{2}\int_0^t\norm{f^m_\nu}_{L^2}^2\E \sup_{0\leq s\leq \tau} \norm{u^m_\nu(s)}_{L^2}^pd\tau\\
	+Cp^2\int_0^t \E\sup_{0\leq s\leq \tau}\norm{u_\nu^m(s)}_{L^2}^{p} \norm{\sigma^m(\tau)}_{L^2}^2 d\tau +Cp^2\int_0^t \norm{\sigma^m}_{L^2}^2 d\tau\\
	+\int_0^t\left(\frac{1}{2}(p-2)(p-1)\E\sup_{0\leq s\leq \tau}\norm{u_\nu^m}^{p}_{L^2}+ p-1\right)\norm{\sigma^m}_{L^2}^2d\tau.
\end{multline*}
	Then we obtain by using Gr\"onwall's inequality
 \begin{equation*}
     \E\sup_{0\leq s \leq t}\norm{u^m_{\nu}}_{L^2}^p\leq C.
 \end{equation*}
 The estimate is uniform in $m$ and thus we can send $m\to \infty$ and obtain that it holds in the limit also. As $u_\nu$ is unique, we have $u_\nu^m\to u_\nu$ almost surely. We further note that this estimate is uniform in $\nu$ also.
\end{proof}

\bibliographystyle{abbrv}
\bibliography{energyconservationbib}
\end{document}